\newtheorem{thm}{Theorem}
\newtheorem{lemma}[thm]{Lemma}
\newcommand{\N}{{\mathbb N}}
\declaretheoremstyle[notefont=\bfseries,notebraces={}{},%
    headpunct={},postheadspace=1em,spaceabove=0.5em,spacebelow=0.5em]{mystyle}
\declaretheorem[style=mystyle,numbered=no,name=Theorem]{thm-hand}
\declaretheorem[style=mystyle,numbered=no,name=Conjecture]{conj-hand}
\declaretheorem[style=mystyle,numbered=no,name=Definition.]{defn-hand}
\declaretheorem[style=mystyle,numbered=no,name=Theorem.]{thm-no}
\declaretheorem[style=mystyle,numbered=no,name=Conjecture.]{conj-no}
\declaretheorem[style=mystyle,numbered=no,name=Lemma.]{lemma-no}
\declaretheorem[style=mystyle,numbered=no,name=Lemma]{lemma-hand}
\def\resetMathstrut@{%
  \setbox\z@\hbox{%
    \mathchardef\@tempa\mathcode`\[\relax
    \def\@tempb##1"##2##3{\the\textfont"##3\char"}%
    \expandafter\@tempb\meaning\@tempa \relax
  }%
  \ht\Mathstrutbox@\ht\z@ \dp\Mathstrutbox@\dp\z@}
\newtheorem{conj}[thm]{Conjecture}
\newtheorem{prop}[thm]{Proposition}
\theoremstyle{definition}
\newtheorem{defn}[thm]{Definition}
\theoremstyle{remark}
\newtheorem*{remark}{Remark}
\newcommand{\Z}{\mathbb Z}
\newcommand{\Tn}{T_n}
\newcommand{\Tnn}{T_{n+1}}
\newcommand{\tildeTn}{\widetilde{\Tn}}
\newcommand{\tildeTnn}{\widetilde{\Tnn}}
\newcommand{\Un}{U_n}
\newcommand{\Unn}{U_{n+1}}
\newcommand{\tildeUn}{\widetilde{\Un}}
\newcommand{\tildeUnn}{\widetilde{\Unn}}
\newcommand{\An}{\mathcal{A}_n}
\newcommand{\Ann}{\mathcal{A}_{n+1}}
\newcommand{\Bn}{\mathcal{B}_n}
\newcommand{\Bnn}{\mathcal{B}_{n+1}}
\newcommand{\nc}{\newcommand}
\nc{\on}{\operatorname}
\nc{\Spec}{\on{Spec}}
\author{Richard Moy}
\thanks{Willamette University, Salem OR. Email: \texttt{rmoy@willamette.edu}}
\author{Mehtaab Sawhney}
\thanks{Massachusetts Institute of Technology, Cambridge MA. Email: \texttt{msawhney@mit.edu}}
\author{David Stoner}
\thanks{Harvard University, Cambridge MA. Email: \texttt{dstoner@college.harvard.edu}}
\date{\today}
\begin{document}
\title{Characters of Independent Stanley Sequences}
\begin{abstract}
Odlyzko and Stanley introduced a greedy algorithm for constructing infinite sequences with no 3-term arithmetic progressions when beginning with a finite set with no 3-term arithmetic progressions. The sequences constructed from this procedure are known as \textit{Stanley sequences} and appear to have two distinct growth rates which dictate whether the sequences are structured or chaotic. A large subclass of sequences of the former type is independent sequences, which have a self-similar structure. An attribute of interest for independent sequences is the character. In this paper, building on recent progress, we prove that every nonnegative integer $\lambda\not\in\{1,3,5,9,11,15\}$ is attainable as the character of an independent Stanley sequence, thus resolving a conjecture of Rolnick.
\end{abstract}

\maketitle


\section{Introduction}\label{sect:intro}
Let $\N_0$ denote the set of nonnegative integers. A subset of $\N_0$ is called $\ell$-free if it contains no arithmetic progressions (APs) with $\ell$-terms. We say a subset, or sequence of elements, of $\N_0$ is free of arithmetic progressions if it is $3$-free. In 1978, Odlyzko and Stanley \cite{OS78} used a greedy algorithm (see Definition \ref{def:greedy}), further generalized in \cite{ELRSS99}, to produce AP-free sequences. Their algorithm produced sequences with two distinct growth rates -- those which are highly structured (Type I) and those which are seemingly random (Type II). These classes of Stanley sequences will be more precisely defined in Conjecture \ref{conj:growth}.

\begin{defn}\label{def:greedy}
Given a finite $3$-free set $A=\{a_0,\dots,a_n\}\subset\N_{0}$, the \textit{Stanley sequence generated by $A$} is the infinite sequence $S(A)=\{a_0,a_1,\dots\}$ defined by the following recursion. If $k\ge n$ and $a_0<\dots<a_k$ have been defined, let $a_{k+1}$ be the smallest integer $a_{k+1}>a_k$ such that $\{a_0,\dots, a_k\}\cup\{a_{k+1}\}$ is $3$-free. Though formally one writes $S(\{a_0,\dots,a_n\})$, we will frequently use the notation $S(a_0,\dots,a_n)$ instead.
\end{defn}

\begin{remark}
Without loss of generality, we may assume that every Stanley sequence begins with $0$ by shifting the sequence.
\end{remark}

In Rolnick's investigation of Stanley sequences \cite{R17}, he made the following conjecture about the growth rate of the two types of Stanley sequences.

\begin{conj}\label{conj:growth}
Let $S(A)=(a_n)$ be a Stanley sequence. Then, for all $n$ large enough, one of the following two patterns of growth is satisfied:
\begin{itemize}
\item{
Type I: $\alpha\slash 2\le \liminf{a_n\slash n^{\log_2(3)}}\le\limsup{a_n\slash n^{\log_2(3)}}\le \alpha$ for some constant $\alpha$, or
}
\item{
Type II: $a_n=\Theta(n^2\slash \ln(n))$.
}
\end{itemize}
\end{conj}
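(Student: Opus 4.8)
The plan is to establish the dichotomy as a structure-versus-randomness theorem: every Stanley sequence $S(A)=(a_n)$ is, from some point on, either almost self-similar under tripling of scale --- which places it in the Type~I class with an explicit constant $\alpha$ --- or ``mixing'', and I aim to show that mixing forces the Type~II rate $\Theta(n^2/\ln n)$ exactly. First I record the soft bounds that hold for \emph{every} Stanley sequence. Writing $V_k=\{2a_j-a_i:0\le i<j\le k\}$ for the set of values that are forbidden when $a_{k+1}$ is chosen, the greedy rule gives $a_{k+1}-a_k\le|V_k\cap(a_k,a_{k+1})|+1$, and since a single pair $(i,j)$ contributes a value that can fall in at most one gap, $\sum_{k<N}|V_k\cap(a_k,a_{k+1})|\le\binom{N}{2}$; hence $a_N\le\tfrac12 N^2(1+o(1))$. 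In the other direction Roth's theorem forces $a_N/N\to\infty$. The content of the conjecture is that, within this wide window, the truth is always one of the two stated rates --- in particular that no growth exponent strictly between $\log_2 3$ and $2$ ever occurs.

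\textbf{The dichotomy.}
This is the heart of the matter. For a scale $3^m$ set $B_m=S(A)\cap[0,3^m)$ and define the \emph{defect} $D_m(A)$ to be the least number of elements one must insert into, or delete from, $S(A)\cap[0,3^{m+1})$ so that it becomes $B_m\sqcup(3^m+B_m)$ --- the ``missing last third'' self-similarity exhibited by $S(\{0\})$, for which $S(\{0\})\cap[0,3^{m+1})=B_m\sqcup(3^m+B_m)$, and, up to a bounded correction governed by the initial block and the character, by every one of Rolnick's independent Stanley sequences (the classification of attainable characters carried out below describes exactly which such Type~I behaviours occur). I would then prove a \emph{propagation lemma}: if the defect is ever small relative to $|B_m|$, the greedy dynamics erase it within $O(1)$ further scales, so $(D_m(A))_m$ stays bounded; whereas a defect that grows to a positive proportion of $|B_m|$ can only increase afterward, so $(D_m(A))_m$ cannot stay bounded once it is large. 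Thus $(D_m(A))_m$ is either eventually bounded --- in which case Rolnick's structure theory applies to the bounded ``core'' and yields Type~I with the associated $\alpha$ --- or $D_m(A)\to\infty$. In the second case the accumulating defects wipe out every eventual periodicity of $\{a_0,\dots,a_n\}$ modulo powers of $3$, and I would upgrade this to genuine equidistribution of $\{a_0,\dots,a_n\}$ in residue classes to all moduli, at a polynomial rate --- the quantitative content of ``mixing''.

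\textbf{The Type~II rate.}
Granting such equidistribution, I would turn the Odlyzko--Stanley heuristic into a proof. The decisive statistic is the representation count $r(v)=\#\{(i,j):i<j,\ 2a_j-a_i=v\}$ for $v\in(a_N,2a_N]$: equidistribution forces $r(v)$ to concentrate about its mean, so that the number of \emph{distinct} forbidden values below $a_N$ is $\asymp N^2/\ln N$ rather than $\asymp N^2$, the logarithm being the typical multiplicity of a forbidden value. A Poisson-type estimate for the lengths of the runs of forbidden integers just above $a_N$ then converts this count into $a_N=\Theta(N^2/\ln N)$, which is the Type~II conclusion.

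\textbf{Main obstacle.}
The crux is the propagation lemma, i.e.\ excluding any \emph{stable} intermediate regime: a sequence that copies the base-$3$ self-similarity on a positive fraction of scales while carrying a positive density of unrepaired defects could a priori have a growth exponent strictly between $\log_2 3$ and $2$, and ruling this out needs a genuine rigidity input --- a quantitative assertion that a single unrepaired defect either dies within $O(1)$ steps of the greedy map or sets off an avalanche, with no equilibrium in between. Proving this, together with obtaining the logarithmic gain on both sides of the Type~II step (which is sensitive to the multiplicities of small forbidden values, and hence on the upper side to how far the current bounds for the $3$-term AP counting function can be pushed), is where the real difficulty lies; the remainder is bookkeeping on top of Rolnick's structure theory and the character classification of the present paper.
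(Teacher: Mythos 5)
The statement you are attempting is Conjecture \ref{conj:growth}, which is Rolnick's growth-dichotomy conjecture quoted from \cite{R17}. The paper does not prove it and does not claim to: it is stated as an open conjecture, the surrounding text calls Type II sequences ``mysterious,'' and the conclusion explicitly lists proving anything about Type II growth as a significant open problem. The paper's actual results concern a different question entirely (which integers occur as characters of independent Stanley sequences), so there is no proof in the paper to compare yours against --- and your proposal does not supply one either.

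Your write-up is a research programme, not a proof, and you essentially say so yourself. The two load-bearing steps are both missing. First, the ``propagation lemma'' --- that a small defect from base-$3$ self-similarity is erased in $O(1)$ scales while a proportional defect must grow, with no stable intermediate regime --- is exactly the rigidity statement that would rule out growth exponents strictly between $\log_2 3$ and $2$; you name it as the main obstacle but give no argument, and nothing in the present paper (Lemmas \ref{lemma:T}, \ref{lemma:U}, or the character constructions) bears on it, since those results only build specific structured sequences rather than constrain arbitrary ones. Second, even granting ``$D_m(A)\to\infty$,'' the upgrade to polynomial-rate equidistribution in all residue classes, the concentration of the representation count $r(v)$, and the conversion of that into $a_N=\Theta(N^2/\ln N)$ are precisely the unproven Odlyzko--Stanley heuristic; no Stanley sequence is currently known to satisfy the Type II rate, so the lower and upper bounds in that step cannot be dismissed as bookkeeping. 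A smaller point: your defect is measured against $B_m\sqcup(3^m+B_m)$, which is exactly correct only for $S(0)$; independent sequences satisfy $a_{2^k+i}=a_{2^k}+a_i$ with an offset governed by the character $\lambda$, so the Type I ``core'' comparison would already need the pseudomodular/modular structure theory of \cite{MR16} rather than literal last-third self-similarity. In short, the statement remains open, and the proposal has genuine gaps at both of its central steps.
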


Though Type II Stanley sequences are mysterious, a great deal of progress has been made in classifying Type I sequences \cite{MR16}. In \cite{R17}, Rolnick introduced the concept of the \textit{independent Stanley sequence} which follow Type I growth and are defined as follows:
\begin{defn}\label{def:independent}
A Stanley sequence $S(A)=(a_n)$ is \textit{independent} if there exist constants $\lambda=\lambda(A)$ and $\kappa=\kappa(A)$ such that for all $k\ge \kappa$ and $0\le i<2^k$, we have 
\begin{itemize}
\item{
$a_{2^{k}+i}=a_{2^k}+a_i$
}
\item{
$a_{2^k}=2a_{2^k-1}-\lambda+1.$
}
\end{itemize}
\end{defn}
This definition's importance stems from the fact that a suitable generalization of such sequences, known as \textit{psuedomodular} sequences, defined by Moy and Rolnick in \cite{MR16}, appear to encompass all Type I sequences, and many natural Type I sequences fall into this class.
\begin{defn}
Given a Stanley sequence $S(A)$, we define the \textit{omitted set} $O(A)$ to be the set of nonnegative integers that are neither in $S(A)$ nor are covered by $S(A)$. For $O(A)\ne \varnothing$, we let $\omega(A)$ denote the largest element of $O(A)$. 
\end{defn}
\begin{remark}
The only Stanley sequence $S(A)$ with $O(A)=\varnothing$ is $S(0)$.
\end{remark}
Using this definition, one can show the following lemma.

\begin{lemma}[Lemma 2.13, \cite{R17}]\label{lemma:omitted}
If $S(A)$ is independent, then $\omega(A)<\lambda(A)$.
\end{lemma}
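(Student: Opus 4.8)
The plan is to unwind the definitions of independence and the omitted set and extract a contradiction from assuming $\omega(A) \ge \lambda(A)$. Write $\lambda = \lambda(A)$, $\kappa = \kappa(A)$, and fix a large $k \ge \kappa$. The defining relations give $a_{2^k} = 2a_{2^k - 1} - \lambda + 1$ and $a_{2^k + i} = a_{2^k} + a_i$ for $0 \le i < 2^k$. The key structural fact I would exploit is that the block $\{a_{2^k}, a_{2^k+1}, \dots, a_{2^{k+1}-1}\}$ is a translate by $a_{2^k}$ of the initial block $\{a_0, \dots, a_{2^k - 1}\}$, and that $a_{2^k}$ is only slightly more than $2a_{2^k - 1}$ — precisely, the "gap" between twice the largest element of the first block and the start of the second block is exactly $\lambda - 1$.

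First I would recall what it means for an integer $m$ to be covered by $S(A)$: there exist $a_i < a_j$ in $S(A)$ with $a_i + m = 2a_j$, equivalently $m = 2a_j - a_i$ and $m > a_j$ (so that $a_i, a_j, m$ would be a 3-AP). The omitted set $O(A)$ consists of those $m \notin S(A)$ that are also not of this form. The greedy construction ensures that every integer strictly between consecutive terms $a_k < a_{k+1}$ is either in $S(A)$'s "forbidden" set — i.e. covered — so $O(A)$ is governed by how the sequence behaves at large scales. I would argue that for $m$ large, $m$ lands in the interval $[a_{2^k}, a_{2^{k+1}})$ for suitable $k \ge \kappa$, and within this range the self-similar structure forces $m$ to be either in $S(A)$ or covered as soon as $m \ge \lambda$.

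The crux of the argument: suppose $m \ge \lambda$ and $m \notin S(A)$; I want to show $m$ is covered. Choose $k \ge \kappa$ minimal with $m < a_{2^{k+1}}$; one checks $m \ge a_{2^k}$ for $k$ large since $a_{2^k}$ grows like a fixed constant times $2^{k \log_2 3}$ while we can also arrange $m \ge a_{2^k}$ by taking $k$ as small as permitted (handling the base range $m < a_{2^\kappa}$ separately, where finiteness makes it a finite check absorbed into the constant, or rather into the claim's hypothesis via $\omega(A) < \lambda$). Write $m = a_{2^k} + m'$ with $0 \le m' < a_{2^k}$. By the translated self-similarity, $m$ is covered by the shifted block exactly when $m'$ is covered by (or lies in) the original block — and since $m' < a_{2^k}$, either $m' \in S(A)$ (so $m = a_{2^k} + a_i = a_{2^k + i} \in S(A)$, contradiction) or $m'$ is covered by terms of the initial block, say $m' = 2a_j - a_i$; then $m = a_{2^k} + 2a_j - a_i$. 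To realize $m$ as covered I need $m = 2a_{\ell} - a_{i'}$ for actual terms: take $a_\ell = a_{2^k + j} = a_{2^k} + a_j$ and $a_{i'} = a_j - (m' \text{-witness offset})$... the cleaner route is: $2a_{2^k+j} - (a_{2^k} + a_i) = a_{2^k} + 2a_j - a_i = m$, and both $a_{2^k+j}$ and $a_{2^k} + a_i = a_{2^k+i}$ are genuine terms of $S(A)$ with $i < j \le 2^k + j$ arranged so the AP is nontrivial, hence $m$ is covered. The one case not covered by this is $m' = 0$, i.e. $m = a_{2^k}$, where I use $a_{2^k} = 2a_{2^k-1} - \lambda + 1 < 2a_{2^k - 1} - \omega'$... here I instead show $a_{2^k}$ itself is covered: $2a_{2^k-1} - a_{2^k} = \lambda - 1 < \lambda$, not directly useful, so instead $a_{2^k}$ is covered iff $a_{2^k} = 2a_j - a_i$ for earlier terms, and since $a_{2^k} \ge \lambda$ we can find such a representation precisely because $a_{2^k} - a_{2^k - 1} = a_{2^k-1} - (\lambda - 1)$ and $\lambda - 1$ is itself in $S(A)$ or covered (as $\lambda - 1 < \lambda \le m$, but $\lambda-1$ might be smaller — this is where I'd need the minimality of $k$ and an induction on $m$).

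**The main obstacle** I anticipate is precisely this base-case/boundary analysis: showing that the finitely many "small" omitted values all lie below $\lambda$, and cleanly handling $m' = 0$ and $m'$ near $a_{2^k}$ where the translated-block argument degenerates. I expect the actual proof in \cite{R17} sets this up via an induction on $m$ (every omitted $m \ge \lambda$ would produce a smaller omitted $m' \ge$ something, or directly a coverage), together with the observation that $\lambda - 1 \in S(A) \cup (\text{covered set})$ by construction since $\lambda - 1 = 2a_{2^k - 1} - a_{2^k} + (\text{stuff})$ — i.e., $\lambda$ is exactly calibrated so that it is the first value at which the self-similar recursion "closes up." Formalizing that calibration — that the choice $a_{2^k} = 2a_{2^k-1} - \lambda + 1$ makes $\lambda - 1$ the largest omitted value and nothing $\ge \lambda$ escapes — is the heart of the matter, and everything else is bookkeeping with the two displayed identities in Definition \ref{def:independent}.
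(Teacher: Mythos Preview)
The paper does not actually prove this lemma; it is quoted from Rolnick's paper \cite{R17} and used as a black box (the sentence immediately after the statement says only that it ``will be used (sometimes implicitly)'' later). So there is no proof here to compare your attempt against.

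Evaluating your proposal on its own: the broad strategy---exploit the block structure $a_{2^k+i}=a_{2^k}+a_i$ together with $a_{2^k}=2a_{2^k-1}-\lambda+1$---is the right one, and the lifting step ``if $m'=2a_j-a_i$ then $m=a_{2^k}+m'=2a_{2^k+j}-a_{2^k+i}$'' is correct. But the write-up is not a proof: it is a plan with gaps you yourself flag. Concretely, your descent $m\mapsto m'$ stalls when $m'\in O(A)$, since then there is no covering to lift and you cannot yet invoke $m'<\lambda$ without circularity; your treatment of the range $\lambda\le m<a_{2^{\kappa}}$ is deferred to ``a finite check absorbed into the constant,'' which is exactly the content of the lemma; and the $m'=0$ case trails off mid-sentence. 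A tighter version of the same idea is to fix $m\in O(A)$, take $k$ large, note that $m+a_{2^k}\notin S(A)$ (by the block description) yet must be covered (it exceeds the generating set, so the greedy rule forces a $3$-AP below it), and then case on the covering pair $a_p<a_q$: if both lie below $a_{2^k}$ one gets $m+a_{2^k}\le 2a_{2^k-1}=a_{2^k}+\lambda-1$, hence $m<\lambda$ as desired; if both lie at or above $a_{2^k}$ one contradicts $m\in O(A)$. The genuinely delicate case is the mixed one $a_p<a_{2^k}\le a_q$, and ruling it out is precisely the ``boundary analysis'' you identified as the main obstacle and did not resolve.
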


Lemma \ref{lemma:omitted} will be used (sometimes implicitly) in the proofs of Lemmas \ref{lemma:T} and \ref{lemma:U}. The constant $\lambda$ from Definition \ref{def:independent} is called the \textit{character}, and it is easy to show that $\lambda\ge 0$ for all independent Stanley sequences. If $\kappa$ is taken as small as possible, then $a_{2^\kappa}$ is called the \textit{repeat factor}. Informally, $\kappa$ is the point at which the sequence begins its repetitive behavior. Rolnick and Venkataramana proved that every sufficiently large integer $\rho$ is the repeat factor of some independent Stanley sequence \cite{RV15}.

Rolnick also made a table \cite{R17} of independent Stanley sequences with various characters $\lambda\ge 0$. He found Stanley sequences with every character up to $75$ with the exception of those in the set $\{1,3,5,9,11,15\}$. In light of his observations, he made the following conjecture.

\begin{conj}[Conjecture 2.15, \cite{R17}]\label{conj:allchar}
The range of the character function is exactly the set of nonnegative integers $n$ that are not in the set $\{1,3,5,9,11,15\}$.
\end{conj}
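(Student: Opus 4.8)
The plan is to prove the two inclusions of Conjecture~\ref{conj:allchar} separately. The reverse inclusion --- that none of $1,3,5,9,11,15$ is a character --- is a finite check: for a character $\lambda$ this small, Lemma~\ref{lemma:omitted} (the omitted set lies below $\lambda$) together with the rigidity forced on an independent sequence leaves only finitely many candidates for the self-similar kernel $\{a_0,\dots,a_{2^\kappa-1}\}$, and for each one runs the greedy rule far enough to see that the defining relation $a_{2^k}=2a_{2^k-1}-\lambda+1$ must fail. The substance of the paper is the forward inclusion, which I would prove by exhibiting explicit generating sets realizing every remaining character.

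Concretely, I would introduce two parametrized families of finite $3$-free sets, $\An$ and $\Bn$ --- which one may picture as controlled modifications of initial segments of the classical base-$3$-digit sequence $S(0)$, whose character is $0$ --- and set $\Tn=S(\An)$ and $\Un=S(\Bn)$. The targets are then Lemma~\ref{lemma:T}, asserting that for every $n$ past a small threshold $\Tn$ is independent with explicitly computed character $\lambda(\Tn)$, repeat factor, and omitted set, and Lemma~\ref{lemma:U}, the analogous statement for $\Un$ and $\lambda(\Un)$. The families are arranged so that $\{\lambda(\Tn)\}_n$ and $\{\lambda(\Un)\}_n$ are two arithmetic progressions which, once a finite list of sporadic generating sets has been checked by hand to cover the remaining small characters, together realize exactly $\N_0\setminus\{1,3,5,9,11,15\}$; combining the two lemmas with the sporadic list then gives the theorem.

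The heart of the argument --- and the step I expect to be the main obstacle --- is proving that $S(\An)$ and $S(\Bn)$ genuinely equal the self-similar sequences written down, that is, that the greedy algorithm, iterated forever, never stops short of a predicted doubling term and never inserts an unpredicted term between consecutive terms. I would argue by induction on the doubling level $k$: granting agreement with the claimed formula through index $2^k$, one shows (i) the predicted value $a_{2^k}+a_i$ is admissible --- it forms no $3$-term progression with two earlier terms --- so the algorithm does not skip ahead, and (ii) every integer strictly between the preceding term and $a_{2^k}+a_i$ is inadmissible. Both reduce to a precise description of which integers are covered at stage $2^k+i$, and here one uses that the omitted set has stabilized below $\lambda$ (Lemma~\ref{lemma:omitted}) to confine any would-be forbidden progression to a bounded window, turning (i) and (ii) into finite checks that propagate cleanly from level $k$ to level $k+1$. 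The shifted auxiliary sequences $\tildeTn$ and $\tildeUn$ serve as the bookkeeping device making this propagation transparent: they encode the internal configuration of a single self-similar block, so that the inductive step for $\Tnn$ (resp.\ $\Unn$) can be read off from the corresponding statement for $\tildeTn$ (resp.\ $\tildeUn$). Controlling this interplay between the greedy rule and the imposed $2$-adic self-similarity uniformly in $n$ and $k$ is where essentially all the difficulty lies; the concluding verification that the two progressions together with the sporadic cases exhaust $\N_0\setminus\{1,3,5,9,11,15\}$ is then routine.
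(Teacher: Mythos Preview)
Your plan diverges from the paper in both its machinery and its architecture, and as written it has real gaps.

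\textbf{What the paper actually does.} The objects $\Tn,\Un,\tildeTn,\tildeUn,\An,\Bn$ are not Stanley sequences or generating sets for them; they are (near\nobreakdash-)modular sets built via the tensor product $A\otimes B=A+N\cdot B$ (Lemma~\ref{lemma:tensor}). Lemmas~\ref{lemma:T} and~\ref{lemma:U} assert that $\An$ is a modular set modulo $3^{2n+1}$ with maximum $2\cdot 3^{2n}$, and similarly for $\Bn$; they say nothing directly about characters. The bridge to characters is Lemma~\ref{lemma:nearModtoMod}: a near-modular set mod $N$ with maximum $t$ yields an independent Stanley sequence of character $2t+1-N$. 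Feeding the $\An,\Bn$ into Lemma~\ref{lemma:evenchar} (which dilates and shifts the maximum element to hit every residue) covers all even $\lambda$. The odd $\lambda$ are handled in Section~\ref{sect:odd} by an entirely separate argument: explicit near-modular sets modulo $30$ and $28$ from the appendix tables (Lemmas~\ref{lemma:mod30},~\ref{lemma:mod28}), plus four further tensor families $C_n,D_n,E_n,F_n$ (Lemma~\ref{lemma:expfamily}) to kill the exceptional residues $\lambda\equiv 1\pmod{30}$. The exclusion of $\{1,3,5,9,11,15\}$ is not re-proved here but quoted from \cite{M17}.

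\textbf{Where your plan breaks.} First, you have inverted the roles of the symbols: you set $\Tn=S(\An)$ and describe $\tildeTn$ as a shifted auxiliary sequence, whereas in the paper $\Tn$ is the basic tensor block and $\An$ is obtained from $\tildeTn$ by a single swap of elements. Second, and more seriously, your structural claim that the characters $\lambda(\Tn)$ and $\lambda(\Un)$ form two arithmetic progressions that (with finitely many sporadics) exhaust $\N_0\setminus\{1,3,5,9,11,15\}$ is not something two one-parameter families can deliver: the paper's $\An,\Bn$ by themselves would give characters $3^{t-1}+1$, an exponential family, and it is only the additional freedom in Lemma~\ref{lemma:evenchar} (scaling and translating the top element) that fills in all even values. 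For odd $\lambda$ no single modulus works; the paper needs both the mod-$30$ and mod-$28$ tables precisely because each leaves a residual arithmetic progression uncovered, and even then the classes $10\cdot 3^n+1$ and $20\cdot 3^n+1$ require bespoke families. Your outline has no analogue of this layered odd-case argument. Finally, your proposed inductive verification that the greedy algorithm reproduces a prescribed self-similar pattern is exactly what the modular-set formalism is designed to replace: proving that a set is modular mod $N$ (3-free mod $N$ and every residue mod-covered) automatically forces $S(A)$ to be the periodic extension, so one never has to chase the greedy rule level by level. Without that formalism, the step you flag as ``the main obstacle'' remains an obstacle.
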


There has been much recent progress towards verifying this conjecture.

\begin{thm}[Theorem 1.10, \cite{M17}]
Let $S(A)$ be an independent Stanley sequence where $A$ is a finite $3$-free subset of $\N_0$. Then $\lambda(A)\not\in \{1,3,5,9,11,15 \}$.
\end{thm}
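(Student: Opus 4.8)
The plan is to reduce the statement to a finite, checkable assertion about the ``base block'' $B:=\{a_0,\dots,a_{2^\kappa-1}\}$ and the repeat factor $R:=a_{2^\kappa}$ (with $\kappa$ taken minimal), and then eliminate the six values arithmetically; throughout we say that $x$ is \emph{covered by} a set $C$ if $C\cup\{x\}$ contains a $3$-term AP.

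First I would unwind Definition \ref{def:independent}. Taking $k=\kappa$ in the second bullet gives $R=2a_{2^\kappa-1}-\lambda+1$, i.e.
\[
\lambda \;=\; 2\max B - R + 1 .
\]
Combining the two bullets at consecutive levels $k$ and $k+1$ (using $a_{2^{k+1}-1}=a_{2^k}+a_{2^k-1}$ from the first bullet) yields $a_{2^{k+1}}=3a_{2^k}$ for every $k\ge\kappa$; hence $S(A)$ is built by repeatedly concatenating a block with its own translate, $\{a_0,\dots,a_{2^{k+1}-1}\}=\{a_0,\dots,a_{2^k-1}\}\cup\big(a_{2^k}+\{a_0,\dots,a_{2^k-1}\}\big)$, and the gap after each block jumps all the way to $3a_{2^k}$. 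Greediness (Definition \ref{def:greedy}) then translates into two conditions on $(B,R)$: that $R$ is the least integer exceeding $\max B$ that is not covered by $B$, and that every integer of the interval $(R+\max B,\ 3R)$ is covered by $B\cup(R+B)$. One checks these conditions reproduce themselves one level up (the new block $B\cup(R+B)$ has $\max=R+\max B$, repeat factor $3R$, and the same associated value $2\max B-R+1$), so that $(B,R)$ satisfying them is essentially equivalent to $S(B)$ being independent with character $\lambda$.

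Next I would squeeze the covering condition. The interval $(R+\max B,3R)$ contains $3\max B+1-2\lambda$ integers, whereas the only $3$-term APs inside $B\cup(R+B)$ whose top element can land in that interval have the form $\{b_1,\,R+b_2,\,2R+2b_2-b_1\}$ or $\{R+b_1,\,R+b_2,\,R+2b_2-b_1\}$ with $b_i\in B$ (an AP with both ends in $B$ has top at most $2\max B<R+\max B$, and a point of the interval can never be the middle or bottom term of any such AP). Thus at most $\binom{2^\kappa}{2}+4^\kappa<\tfrac{3}{2}\,4^\kappa$ integers of the interval are covered, forcing $3\max B+1-2\lambda\le \tfrac{3}{2}\,4^{\kappa}$; since we only care about $\lambda\le 15$, this bounds $\max B$ — and hence $R$ and the whole configuration — by an explicit function of $\kappa$. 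A parity remark cuts further: each forbidden $\lambda$ is odd, so $R=2\max B+1-\lambda$ is even; since $\lambda\ge1$ we get $R/2\le\max B$; and $R/2\notin B$, for otherwise $\{0,R/2,R\}$ would be a $3$-AP inside $B$, making $R$ covered by $B$, contrary to the definition of $R$. So $R/2$ is either covered by $B$ — a strong rigidity statement, $R/2=2b_2-b_1$ or $R/2=(b_1+b_2)/2$ for some $b_i\in B$ — or else $R/2\in O(A)$, in which case $R/2\le\omega(A)<\lambda\le15$ by Lemma \ref{lemma:omitted}, pinning $(B,R)$ down to a short finite list.

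The crux — and the step I expect to be the main obstacle — is to bound $\kappa$ (equivalently $\max B$) in terms of $\lambda$ when $\lambda\in\{1,3,5,9,11,15\}$, turning the problem into a finite verification. One natural route is a descent on the self-similar structure: show that an independent Stanley sequence with $\kappa\ge1$ and small character restricts to, or rescales to, another independent Stanley sequence whose character is an explicit (affine) function of $\lambda$, and iterate until $\kappa$ is small; here one must use precisely that $\kappa$ is minimal, i.e. that one of the two conditions of Definition \ref{def:independent} fails at level $\kappa-1$, to keep the descent honest. A second, more hands-on route is to observe that the covering of $(R+\max B,3R)$ is essentially tight — almost every one of its $\approx\tfrac{3}{2}\max B$ integers must be hit with essentially no slack — which forces $2B-B$ to be (close to) an interval; a $3$-free set $B$ with $2B-B$ an interval is extremely rigid, in particular small, again bounding $\kappa$. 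Once $\kappa$, and therefore $B$ and $R$, are bounded by an explicit function of $\lambda$, one enumerates the finitely many admissible base blocks and checks in each case that either the covering of $(R+\max B,3R)$ fails, or $B\cup(R+B)$ contains a $3$-term AP, or the omitted set reaches $\lambda$, contradicting Lemma \ref{lemma:omitted}; this rules out all six values.
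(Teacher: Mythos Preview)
First, a framing point: the present paper does not prove this theorem. It is quoted from \cite{M17} as prior work supplying one direction of Conjecture~\ref{conj:allchar}; the paper itself proves the complementary direction (that every $\lambda\notin\{1,3,5,9,11,15\}$ is attained). There is therefore no proof in this paper to compare your attempt against.

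On the merits of your proposal, the step you yourself label ``the main obstacle'' is a genuine gap, and neither of the two routes you sketch closes it. Your counting inequality $3\max B+1-2\lambda\le\tfrac{3}{2}\,4^{\kappa}$ only bounds $\max B$ above by roughly $\tfrac{1}{2}4^{\kappa}$; but the available lower bound is merely $\max B\ge 2^{\kappa}-1$ (from $|B|=2^{\kappa}$), and standard independent base blocks have $\max B$ of order $3^{\kappa}$, so the inequality is slack by an exponential factor and gives no bound on $\kappa$ at all. For the same reason the ``tightness'' route fails: with $\sim\tfrac{3}{2}4^{\kappa}$ candidate covering APs against only $\sim 3\max B$ targets, the covering is far from tight, so nothing forces $2B-B$ to be (close to) an interval. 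Even if it did, your inference would be false: for every $\kappa$, the initial $2^{\kappa}$ terms of $S(0)$ form a $3$-free set $B$ with $2B-B$ equal to the full interval $[-\max B,\,2\max B]$, directly refuting the claim that such a $B$ must be small. The descent idea is likewise only a hope: minimality of $\kappa$ means precisely that the recurrences of Definition~\ref{def:independent} fail at level $\kappa-1$, which is exactly what blocks a naive reduction. In short, your outline correctly isolates the crux but does not supply the missing idea, so the promised ``finite verification'' is never reached; the argument in \cite{M17} proceeds quite differently, working with the omitted set and the inequality $\omega(A)<\lambda(A)$ rather than trying to bound $\kappa$.
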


\begin{thm}[Theorem 1.5, \cite{S17}]
All nonnegative integers $\lambda\equiv 0\bmod 2$ and $\lambda\nequiv 244\bmod 486$ can be achieved as characters of independent Stanley sequences.
\end{thm}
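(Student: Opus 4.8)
The plan is to realize every admissible even $\lambda$ by an explicit, recursively built finite $3$-free set and to certify independence using the structural description of independent Stanley sequences from \cite{R17,MR16}. The first step reduces the problem to a finitary one. An independent sequence is pinned down by its stabilized initial block $\{a_0<\dots<a_{2^\kappa-1}\}$: the next greedy term $a_{2^\kappa}$ is then forced, and by Definition~\ref{def:independent} the character is $\lambda = 2a_{2^\kappa-1}+1-a_{2^\kappa}$; conversely, a finite $3$-free ``pattern'' $P$ generates an independent sequence exactly when a few checkable conditions hold---the greedy continuation of the block must coincide with its own translate by $a_{2^\kappa}$, and $\omega(P)<\lambda(P)$ as in Lemma~\ref{lemma:omitted}. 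So the task becomes: construct a finite $3$-free pattern $P$ meeting those conditions whose block data encodes a prescribed value of $2a_{2^\kappa-1}+1-a_{2^\kappa}$.

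The second step is the recursion. The natural approach is to carry along two families of patterns $\An$ and $\Bn$ defined by a mutual recursion, with $\Tn=S(\An)$ and $\Un=S(\Bn)$ independent. The construction is ``base $3$'': one inflates a pattern by rebuilding its self-similar doubling scaffold around roughly three times the previous repeat factor, nesting two shifted copies of the old block the way the digits $\{0,1\}$ sit inside base $3$, and inserting a bounded, tunable amount of even padding near the top of the block---the parameter that steers the character. Lemmas~\ref{lemma:T} and~\ref{lemma:U} would then show that one inflation step sends a character $\lambda$ to $3\lambda$ plus a controlled even constant, with the mutual recursion also passing characters between the $\An$-tower and the $\Bn$-tower, so that $\lambda(\An)$ and $\lambda(\Bn)$ sweep out two interlocking families of residue classes; a small finite base family, an extension of Rolnick's table verified by direct computation, supplies the seeds.

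The third step is the arithmetic: one checks that closing the base family under the inflation steps produces exactly the even integers not congruent to $244$ modulo $486$. The modulus $486=2\cdot 3^5$ reflects the base-$3$, depth-five structure of the recursion: a character's residue modulo $3^5$ is determined by its last five inflation steps, and the even constants $c$ available over five iterations of $\lambda\mapsto 3\lambda+c$ realize every residue except the one singled out by $244=3^5+1$; equivalently, landing on an even value $\equiv 1$ modulo $3^5$ would force the recursion to originate at a seed too small to satisfy $\omega<\lambda$. This class is genuinely out of reach of this method (it is recovered by the other constructions of the present paper, but that is beyond \cite{S17}).

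The main obstacle is the independence verification that Lemmas~\ref{lemma:T} and~\ref{lemma:U} must deliver. Because the greedy rule is unforgiving, one has to show that the inflated pattern's Stanley sequence adjoins exactly the intended elements and no others up to the new doubling index---that is, compute the omitted sets $O(\An)$ and $O(\Bn)$ precisely and confirm $\omega(\An)<\lambda(\An)$ and $\omega(\Bn)<\lambda(\Bn)$---and that the self-similar relations $a_{2^k+i}=a_{2^k}+a_i$ persist for all large $k$. Both reduce to propagating, alongside the elements of each pattern, its full collision data---the set of pairwise differences and the set of doubly covered points---and proving the inflation respects all of it; this bookkeeping, carried out as an induction that threads the mutual recursion between $\An$ and $\Bn$, is where essentially all the difficulty lies, the residue count of the third step being comparatively routine.
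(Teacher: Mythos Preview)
This theorem is a \emph{cited} result from \cite{S17}; the present paper does not give its own proof of it. What the paper does prove, in Section~\ref{sect:even}, is strictly stronger: \emph{every} positive even integer is a character (Lemma~\ref{lemma:evenchar} together with Theorem~\ref{thm:A_t}). So the natural comparison is between your proposal and the paper's Section~\ref{sect:even}.

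Your proposal does not engage the mechanism that actually makes the paper's argument work. The paper never tries to verify independence by tracking omitted sets, collision data, and the greedy rule directly, as you suggest. Instead it passes entirely through the algebra of (near\nobreakdash-)modular sets: Lemma~\ref{lemma:nearModtoMod} converts a near-modular set $L\bmod N$ with maximum element $t$ into an independent Stanley sequence of character $2t+1-N$, and Lemma~\ref{lemma:evenchar} reduces ``all even characters'' to the existence, for each $t$, of a modular set $A_t\bmod 3^t$ with $\max A_t=2\cdot 3^{t-1}$. The sets $T_n,\widetilde{T_n},U_n,\widetilde{U_n}$ are explicit modular sets built with the product $\otimes$ of Lemma~\ref{lemma:tensor}, and $\mathcal A_n,\mathcal B_n$ are obtained from $\widetilde{T_n},\widetilde{U_n}$ by swapping a single element; Lemmas~\ref{lemma:T} and~\ref{lemma:U} then check, by hand, that the swap preserves $3$-freeness and mod-covering. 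There is no mutual recursion between $\mathcal A_n$ and $\mathcal B_n$, no ``inflation sending $\lambda$ to $3\lambda+c$,'' and no residue-class sieve modulo $486$; the exclusion of $244\bmod 486$ is an artifact of the weaker construction in \cite{S17} and simply does not appear here.

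As written, your plan is not a proof but a description of where a proof would have to go. The ``main obstacle'' you identify in your final paragraph---computing $O(\mathcal A_n)$ and $O(\mathcal B_n)$ exactly, verifying the self-similar relations, and propagating all pairwise-difference data through an inflation step---\emph{is} the entire content, and you give no construction on which to run it. The numerological remark that $244=3^5+1$ and that five iterations of $\lambda\mapsto 3\lambda+c$ miss exactly one residue class is asserted, not derived; without a specified list of admissible constants $c$ it cannot be checked. If you want to pursue a direct-pattern approach rather than the modular-set route, you would at minimum need an explicit inflation rule $P\mapsto P'$ on finite $3$-free sets and a lemma showing it preserves the independence hypotheses; the paper's $\otimes$ product is precisely such a rule, and its use is what lets the argument avoid the bookkeeping you flag.
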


The aim of this paper is to prove Conjecture \ref{conj:allchar}. 
\begin{thm}\label{thm:main1}
All nonnegative integers $\lambda\not\in\{1,3,5,9,11,15\}$ can be achieved as characters of independent Stanley sequences.
\end{thm}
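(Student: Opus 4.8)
The plan is to leverage the two existing partial results together with an explicit construction that fills in the remaining residue classes. By Theorem 1.5 of \cite{S17}, every even $\lambda$ outside the single residue class $244 \bmod 486$ is already known to be a character, so the real work is to handle (i) the odd characters $\lambda \not\in\{1,3,5,9,11,15\}$ and (ii) the stray even values $\lambda \equiv 244 \bmod 486$. The natural strategy is to produce a flexible family of base sets $A$ (depending on one or more integer parameters) whose Stanley sequences $S(A)$ are provably independent, and to compute $\lambda(A)$ as an explicit affine or piecewise-affine function of those parameters, so that the image covers all sufficiently large integers in every needed residue class; the finitely many small cases can then be checked by hand or by the table in \cite{R17}.

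First I would set up the bookkeeping machinery: for a candidate base set $A$, one needs a criterion for independence that is checkable in finitely many steps. Here the key tool is Lemma \ref{lemma:omitted} (and the structure of the omitted set $O(A)$): if one can exhibit a stage $2^\kappa$ past which the recursion in Definition \ref{def:independent} provably continues — i.e.\ $a_{2^k} = 2a_{2^k-1}-\lambda+1$ and the self-similar doubling $a_{2^k+i}=a_{2^k}+a_i$ both persist — then $S(A)$ is independent with that $\lambda$. In practice this reduces to verifying that no ``new'' $3$-term AP is created by the greedy choice, which is a statement about $A$ together with finitely many of its early terms and its omitted set. I would isolate this as a self-contained lemma: a combinatorial condition on a finite set guaranteeing independence and pinning down $\lambda$. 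This is essentially the role played by Lemmas \ref{lemma:T} and \ref{lemma:U} referenced in the excerpt, which presumably package two parametrized constructions $T_n$ and $U_n$ (the macros $\Tn, \Un, \An, \Bn$ suggest exactly such families).

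Next I would carry out the constructions. Concretely: (a) take the known building blocks producing even characters and modify them by a small, controlled perturbation (adding one or two elements near the top, or shifting by a structured block) to shift the character by an odd amount, thereby reaching the odd residues; and (b) for the class $244 \bmod 486$, find a genuinely different base set — perhaps a ``sum'' or ``concatenation'' operation on two independent sequences, for which the characters add or combine in a predictable way (Rolnick's constructions in \cite{R17} include such composition operations) — so that $244 \bmod 486$ is hit from outside the $S17$ family. For each family I would: (1) write down $A$ in terms of the parameter; (2) compute the first several greedy terms and the omitted set $O(A)$; (3) invoke the independence lemma; (4) read off $\lambda$ as a function of the parameter; (5) check the formula covers all large enough targets in the desired class. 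Finally I would assemble: Theorem 1.5 of \cite{S17} plus the odd-character family plus the $244\bmod 486$ family cover all $\lambda$ above some bound $N_0$, and the range $[0,N_0]\setminus\{1,3,5,9,11,15\}$ is handled by Rolnick's explicit table together with the lower bound $\lambda\ge 0$ and Theorem 1.10 of \cite{M17} (which shows those six values are genuinely excluded, so the statement is sharp).

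The main obstacle I expect is step (2)--(3) for the odd-character family: verifying that the perturbed base set really does generate an \emph{independent} sequence rather than merely a Type I one, since adding an element can cascade through the greedy recursion and either destroy the self-similar structure or shift $\kappa$ unpredictably. Controlling the omitted set $O(A)$ under the perturbation — keeping $\omega(A) < \lambda(A)$ as required by Lemma \ref{lemma:omitted}, and ensuring the doubling identity $a_{2^k+i}=a_{2^k}+a_i$ is not spoiled for any $i<2^k$ — is the delicate combinatorial heart of the argument, and is where the bulk of the casework (likely organized by the residue of the parameter modulo a small power of $3$, given the $486 = 2\cdot 3^5$ appearing in \cite{S17}) will live.
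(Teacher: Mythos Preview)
Your proposal is a high-level plan rather than a proof, and it is missing the central idea that makes the argument go through. You propose to build base sets $A$, run the greedy algorithm, and verify the self-similar recursion of Definition \ref{def:independent} directly, acknowledging that controlling $O(A)$ under perturbation is ``the delicate combinatorial heart of the argument.'' The paper sidesteps this difficulty entirely. The key tool is Lemma \ref{lemma:nearModtoMod}: a \emph{near-modular} set $L \bmod N$ with $|L|$ a power of two and maximum element $t$ automatically yields an \emph{independent} Stanley sequence with character $\lambda = 2t+1-N$. Independence is guaranteed by the modular structure (via the tensor product $\otimes$ of Lemma \ref{lemma:tensor} and the conversion to a genuine modular set by tensoring with enough copies of $\{0,1\}$); one never has to trace the greedy recursion or track an omitted set under perturbation. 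Your guessed role for Lemmas \ref{lemma:T} and \ref{lemma:U} is also off: they are not perturbation lemmas but inductive constructions of specific modular sets $\mathcal{A}_n,\mathcal{B}_n \bmod 3^{2n+1}, 3^{2n}$ with prescribed maxima $2\cdot 3^{2n}, 2\cdot 3^{2n-1}$, which feed into Lemma \ref{lemma:evenchar} to produce \emph{all} even characters directly (the paper does not invoke the $244 \bmod 486$ result of \cite{S17} as a black box).

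For odd $\lambda$ the paper again works purely with near-modular sets: explicit tables of near-modular sets $\bmod\ 30$ and $\bmod\ 28$ (Lemmas \ref{lemma:mod30}, \ref{lemma:mod28}) give all odd $\lambda$ large enough except $\lambda\equiv 1\bmod 30$; Lemma \ref{lemma:expfamily} tensors the sets $R_n$ from Theorem \ref{thm:A_t} with small sets like $\{0,7,9,16\}$ to cover $\lambda\equiv 1\bmod 30$ except two exponential families $10\cdot 3^n+1$, $20\cdot 3^n+1$; and those are caught by the $\bmod\ 28$ construction since $7\nmid 10\cdot 3^n$ and $7\nmid 20\cdot 3^n$. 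Small cases come from Rolnick's table. So the missing ingredient in your plan is precisely the near-modular framework and Lemma \ref{lemma:nearModtoMod}: once you have it, producing a character reduces to exhibiting a finite set with three checkable mod-$N$ properties and a specified maximum, rather than the open-ended greedy verification you anticipate.
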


The proof of this theorem may be found at the end of Section \ref{sect:odd}. In order to prove this result, we will utilize the theory of modular sets developed in \cite{MR16} and near-modular sets developed in \cite{S17}. Modular sequences comprise a class of Stanley sequences of Type I which contains all independent Stanley sequences as a strictly smaller subset. 


\begin{defn}
Let $A$ be a set of integers and $z$ be an integer. We say that $z$ is \emph{covered} by $A$ if there exist $x,y\in A$ such that $x<y$ and $2y-x=z$. We frequently say that $z$ is covered by $x$ and $y$.

Suppose that $N$ is a positive integer. If $x,y,z\in\Z$, we say they form an arithmetic progression modulo $N$, or a \emph{mod-AP}, if $2y-x\equiv z\pmod N$. 

Suppose again that $N$ is a positive integer and $A\subseteq \Z$. Then we say that $z$ is \emph{covered by $A$ modulo $N$}, or \emph{mod-covered}, if there exist $x,y\in A$ with $x\le y$ such that $x,y,z$ form an arithmetic progression modulo $N$.
\end{defn}

\begin{defn}
Fix an $N\ge 1$. Suppose the set $A\subset\{0,\ldots,N-1\}$ containing $0$ is 3-free modulo $N$ and all integers $\ell$ are covered by $A$ modulo $N$. Then $A$ is said to be a \emph{modular set modulo $N$} and $S(A)$ is said to be a \emph{modular Stanley sequence modulo $N$}.
\end{defn}

Observe that the modulus $N$ of a modular Stanley sequence is analogous to the repeat factor $\rho$ of an independent Stanley sequence. One can make this statement more precise in the following proposition:

\begin{prop}[Proposition 2.3, \cite{MR16}]
Suppose $A$ is a finite subset of the nonnegative integers and suppose $S(A)$ is an independent Stanley sequence with repeat factor $\rho$. Then $S(A)$ is a modular Stanley sequence modulo $3^n\cdot \rho$ for some integer $n\ge 0$.
\end{prop}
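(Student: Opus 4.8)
The plan is to realise $S(A)$ as the Stanley sequence of an explicit modular set. Writing $T_k=\{a_0,\dots,a_{2^k-1}\}$, I would take $B:=T_{\kappa+n}$ for a sufficiently large $n$ and show that $B$ is a modular set modulo $N:=3^n\rho$ with $S(B)=S(A)$. The starting point is the self-similar structure forced by Definition \ref{def:independent}: for every $k\ge\kappa$ one has $T_{k+1}=T_k\sqcup(a_{2^k}+T_k)$, and since $a_{2^{k+1}-1}=a_{2^k}+a_{2^k-1}$ the relation $a_{2^{k+1}}=2a_{2^{k+1}-1}-\lambda+1$ collapses to $a_{2^{k+1}}=3a_{2^k}$; hence $a_{2^{\kappa+n}}=3^n\rho=N$. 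Since the sequence is increasing, $B=T_{\kappa+n}\subset\{0,\dots,N-1\}$ and $0\in B$; and once $2^{\kappa+n}\ge|A|$, the set $B$ is an initial segment of $S(A)$ from which the greedy rule regenerates the rest, so $S(B)=S(A)$. It remains to verify that $B$ is $3$-free modulo $N$ and that every integer is mod-covered by $B$ modulo $N$.

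For $3$-freeness I would induct on $k\ge\kappa$ with the statement ``$T_k$ is $3$-free modulo $a_{2^k}$''. Abbreviate $M=a_{2^k}$. Given a mod-AP $(u,v,w)$ in $T_{k+1}$ modulo $3M$, the containment $T_{k+1}\subset[0,2M)$ forces $2v-u-w\in\{-3M,0,3M\}$. The value $0$ gives a genuine $3$-AP inside $T_{k+1}\subseteq S(A)$, hence a trivial one. For the values $\pm 3M$, one reads off from $T_{k+1}=T_k\sqcup(M+T_k)$ which ``half'' each of $u,v,w$ lies in and checks that their reductions modulo $M$ form a mod-AP inside $T_k$; by the inductive hypothesis this is trivial, which then yields the numerical absurdity $2M=3M$. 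The base case $k=\kappa$ is the same argument, except that the two configurations $2v-u=w\pm\rho$ are converted directly into genuine nontrivial $3$-APs inside $T_{\kappa+1}\subseteq S(A)$ --- namely $u,v,w+\rho$ in one case and $w,v+\rho,u+\rho$ in the other.

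For covering, let $r\in\{0,\dots,N-1\}$ be the residue of a given integer $\ell$. If $r\in S(A)$, then $r<a_{2^{\kappa+n}}$ gives $r\in B$, and $r$ is mod-covered by the pair $(r,r)$. If $r\notin S(A)$ but $r$ is covered by a pair $a_i<a_j$ of $S(A)$, then $2a_j-a_i=r<N$ together with $a_i<a_j$ forces $a_j<N$, so $a_i<a_j$ already lie in $B$ and cover $r$. The only residues not yet handled lie in $O(A)$, which by Lemma \ref{lemma:omitted} is a fixed finite subset of $\{0,\dots,\lambda-1\}$. For such an $r$ the integer $r+N$ exceeds $\omega(A)$ and is not in $S(A)$, so it is covered by some $a_i<a_j$ of $S(A)$, and since $r+N=2a_j-a_i\ge a_j$ one has $a_j<2N$; the crux is to rule out $a_j\in[N,2N)$. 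In that case $a_j=N+t_j$ with $t_j\in B$ (using $S(A)\cap[N,2N)=N+B$), whence $a_i=N+2t_j-r$ and $t_j<r$: if $2t_j\ge r$ then $a_i=N+(2t_j-r)$ lies in $S(A)\cap[N,2N)=N+B$, so $(a_i-N,t_j)$ is a pair of $S(A)$ covering $r$, contradicting $r\in O(A)$; if $2t_j<r$ then $a_i=N-(r-2t_j)>N-\lambda$, which for $n$ large exceeds $\max B=(N+\lambda-1)/2$, contradicting $a_i\in B$. Hence $a_j<N$, so $a_i<a_j$ lie in $B$ and $2a_j-a_i=r+N\equiv r\pmod N$, i.e.\ $r$ is mod-covered by $B$. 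Therefore $B$ is a modular set modulo $3^n\rho$ with $S(B)=S(A)$, which is the assertion.

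I expect the covering step to be the main obstacle --- in particular, pinning down that the residues not covered ``for free'' are exactly the finitely many elements of the omitted set $O(A)$ and that each of them is reachable by a wrap-around pair inside $B$; the wrap-around bookkeeping in the $3$-freeness induction is a secondary technical point.
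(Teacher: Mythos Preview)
The paper does not contain a proof of this proposition; it is quoted as Proposition~2.3 of \cite{MR16} and used as a black box. So there is no ``paper's own proof'' here to compare against.

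That said, your argument is sound and is exactly the natural route. A couple of small remarks for precision. In the inductive step for $3$-freeness, writing $u=\epsilon_uM+u'$, $v=\epsilon_vM+v'$, $w=\epsilon_wM+w'$ with $\epsilon_\bullet\in\{0,1\}$ and $u',v',w'\in T_k$, the equation $2v-u-w=\pm 3M$ reduces modulo $M$ to $2v'\equiv u'+w'\pmod M$; the inductive hypothesis gives $u'=v'=w'$, whence $2\epsilon_v-\epsilon_u-\epsilon_w=\pm3$, which is impossible since the left side lies in $\{-2,\dots,2\}$. This is what you mean by ``the numerical absurdity $2M=3M$,'' and it is worth stating explicitly. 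In the base case $k=\kappa$ your two lifts $u,v,w+\rho$ and $w,v+\rho,u+\rho$ do indeed land in $T_{\kappa+1}\subseteq S(A)$ and are distinct, so the contradiction is genuine.

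In the covering step, your case split is correct. For $r\in O(A)$ and $a_j=N+t_j$ with $t_j\in B$, the subcase $2t_j\ge r$ produces the pair $(2t_j-r,t_j)$ in $S(A)$ covering $r$, contradicting $r\in O(A)$; the subcase $2t_j<r$ forces $a_i\in(N-\lambda,N)$, and your inequality $N-\lambda>(N+\lambda-1)/2=\max B$ holds as soon as $N>3\lambda-1$, which is guaranteed for large $n$. The two largeness requirements on $n$ (namely $2^{\kappa+n}\ge|A|$ and $3^n\rho>3\lambda-1$) are compatible, so a single $n$ works. Nothing is missing.
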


We end this section with the definition of near-modular sets, a slight generalization of modular sets introduced by Sawhney \cite{S17}, and a product on such sets.
\begin{defn}
Fix $N\in\N$. A set $A$ is said to be \emph{near-modular} $\bmod$ $N$ if $0\in A$, $A$ is 3-free mod $N$, and every integer $\ell$ is mod-covered by $A$.
\end{defn}
Note that the only difference between a near-modular set and a modular set defined earlier is that there is no size restriction on the maximum element of a near-modular set. We now define a product on near-modular sets to compose such sets. This operation is identical to that for modular sets given in Moy and Rolnick in \cite{MR16} and we include the proof for completeness. For notational purposes, $X+Y=\{x+y~|~x\in X, y\in Y\}$ and $c\cdot X=\{cx~|~ x\in X\}$. 

\begin{lemma}\label{lemma:tensor}
Suppose that $A$ is a near-modular set $\bmod$ $N$ and $B$ is a near-modular set $\bmod$ $M$ then $A\otimes B=A+N\cdot B$ is a near modular set $\bmod$ $MN$. We henceforth refer to $\otimes$ as the product of two modular sets.
\end{lemma}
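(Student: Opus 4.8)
The plan is to verify directly the three conditions defining a near-modular set for $A\otimes B=A+N\cdot B$ relative to the modulus $MN$. That $0\in A\otimes B$ is immediate from $0\in A$ and $0\in B$, since $0=0+N\cdot 0$. The content lies in proving $3$-freeness modulo $MN$ and mod-covering modulo $MN$, and in both arguments the guiding idea is the same: reducing a mod-AP relation modulo $N$ extracts exactly the ``$A$-coordinates'' of the elements, while dividing the residual relation by $N$ leaves a mod-AP relation modulo $M$ governed by $B$.

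For $3$-freeness, I would start from $p_1,p_2,p_3\in A\otimes B$ with $2p_2-p_1\equiv p_3\pmod{MN}$ and aim to show $p_1=p_2=p_3$, i.e.\ that the relation is trivial. Write $p_i=a_i+Nb_i$ with $a_i\in A$ and $b_i\in B$; reducing modulo $N$ gives $2a_2-a_1\equiv a_3\pmod N$, so $3$-freeness of $A$ modulo $N$ forces $a_1=a_2=a_3=:a$. Cancelling $a$ turns the relation into $N(2b_2-b_1)\equiv Nb_3\pmod{MN}$, i.e.\ $2b_2-b_1\equiv b_3\pmod M$, and $3$-freeness of $B$ modulo $M$ then forces $b_1=b_2=b_3$, hence $p_1=p_2=p_3$.

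For mod-covering, fix $\ell\in\Z$. Near-modularity of $A$ modulo $N$ yields $a_1\le a_2$ in $A$ with $2a_2-a_1\equiv\ell\pmod N$; write $2a_2-a_1=\ell+Nk$. Near-modularity of $B$ modulo $M$ then yields $b_1\le b_2$ in $B$ with $2b_2-b_1\equiv -k\pmod M$. Setting $p_i=a_i+Nb_i\in A\otimes B$, the inequalities $a_1\le a_2$ and $b_1\le b_2$ combine to give $p_1\le p_2$, and
\[
2p_2-p_1=(2a_2-a_1)+N(2b_2-b_1)=\ell+N\bigl(k+2b_2-b_1\bigr)\equiv\ell\pmod{MN},
\]
since $M\mid k+2b_2-b_1$; thus $\ell$ is mod-covered by $A\otimes B$ modulo $MN$.

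The one point to handle with care is that a near-modular set is allowed elements exceeding its modulus, so I should first record that distinct elements of $A$ nevertheless have distinct residues modulo $N$ (equivalently, the natural map $A\to\Z/N\Z$ is injective), which is built into $3$-freeness modulo $N$; this makes the decomposition $p=a+Nb$ with $a\in A$, $b\in B$ unambiguous and legitimizes the cancellations above, and the same remark applies to $B$ modulo $M$. With that in hand everything else is a routine congruence computation, and, as noted, the ordering constraint $x\le y$ required by mod-covering is preserved automatically because $a_1\le a_2$ and $b_1\le b_2$ hold simultaneously.
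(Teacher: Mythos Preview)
Your proof is correct and follows essentially the same approach as the paper's: verify the three defining conditions by reducing the mod-AP relation modulo $N$ to isolate the $A$-coordinates, then dividing out $N$ to obtain a mod-$M$ relation governing the $B$-coordinates. You are in fact slightly more careful than the paper, explicitly checking the ordering $p_1\le p_2$ required for mod-covering and recording the injectivity of $A\to\Z/N\Z$.
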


\begin{proof}
We show that each of the conditions outlined above is satisfied by $A+N\cdot B$.
\begin{itemize}
    \item Since $0\in A, B$, it follows that $0+N\cdot(0)=0\in A\otimes B$.
    \item Suppose that $x_A+Nx_B, y_A+Nb_2,$ and $z_A+Nz_B$ are in arithmetic progression $\bmod$ $MN$ with $x_A,y_A,z_A\in A$ and $x_B,y_B,z_B\in B$. It follows that $x_A, y_A,$ and $z_A$ are in arithmetic progression $\bmod$ $N$, and therefore $x_A=y_A=z_A$ as $A$ is a near-modular set. Thus it follows that $x_B, y_B,$ and $z_B$ form an arithmetic progression $\bmod$ $M$. Since $B$ is a near-modular set as well it follows that $x_A=y_B=z_B$ and the result follows.
    \item It obviously suffices to prove this for $0\le \ell\le MN-1$. Observe that $\ell= (\ell\bmod N)+N\lfloor \frac{\ell}{N}\rfloor$. There exist $x_A,y_A\in A$ such that $2y_A-x_A\equiv \ell\bmod N$ and $y_A\ge x_A$. Therefore it follows that $2y_A-x_A\equiv \ell+C\cdot N\bmod MN$. Now there exists $2y_B-x_B
    \equiv \lfloor \frac{\ell}{N}\rfloor-C\bmod M$ so that $2(y_A+Ny_B)-(x_A+Nx_B)\equiv \ell\bmod MN$ and the result follows.
\end{itemize}
\end{proof}


\section{Even Characters}\label{sect:even}
In light of the similarity between modular and near-modular sets, we connect the two definitions and show that given a near-modular set with a certain maximal value, we can produce a Stanley sequence with a particular character by constructing a suitable modular set.
\begin{lemma}\label{lemma:nearModtoMod}
Given a near-modular set $L\bmod N$ with maximum element $t$, there exists a modular Stanley sequence with character $\lambda=2t+1-N$. Furthermore if $|L|$ is a power of two, this is an independent Stanley sequence.
\end{lemma}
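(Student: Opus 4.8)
The plan is to take the near-modular set $L \bmod N$ with maximum element $t$ and manufacture from it an honest modular set modulo some multiple $3^m N$ of $N$, so that the resulting Stanley sequence is modular; then to read off its character from the structure of that modular set. The natural candidate is to tensor $L$ with a small modular set of $3$-power modulus. Concretely, let $C_1 = \{0\}$ be the modular set modulo $3$ (it is $3$-free mod $3$ and every residue mod $3$ is covered since $0,0,0$ and $0,1$... — one checks $\{0\}$, or if that fails $\{0,1\}$, works modulo $3$), and form $L \otimes C_1^{\otimes m}$ for $m$ large. By Lemma \ref{lemma:tensor} this is near-modular modulo $3^m N$, and for $m$ sufficiently large its maximum element $t$ (which is unchanged, since we are adding $N\cdot 0$ repeatedly if $C_1=\{0\}$, or grows in a controlled way otherwise) becomes smaller than the modulus, so it is genuinely a modular set; hence $S(L\otimes C_1^{\otimes m})$ is a modular Stanley sequence.

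Next I would compute the character. The key identity is that for a modular Stanley sequence modulo $N'$ whose modular set $A$ has maximum element $t'$, the sequence exhibits the self-similar doubling $a_{2^k + i} = a_{2^k} + a_i$ with $a_{2^k} = 2a_{2^k - 1} - \lambda + 1$ and $\lambda = 2t' + 1 - N'$ — this is exactly the arithmetic relating the last "real" element $a_{2^k-1}$ of one block, the modulus-shift to the next block, and the omitted set below the modulus. So I would show: (i) within the first block of $|A|$ terms, $S(A)$ lists precisely the elements of $A$ in increasing order, so $a_{|A|-1} = t'$; (ii) the next term $a_{|A|}$ equals $N'$ exactly because everything strictly between $t'$ and $N'$ is covered mod $N'$ hence covered, while $N'$ itself is not yet covered (this uses $3$-freeness and the covering property of the modular set, plus that $N' > t'$ so there is "room"); (iii) therefore $a_{|A|} = N' = 2t' + 1 - (2t'+1-N') = 2a_{|A|-1} - \lambda + 1$ with $\lambda = 2t'+1-N'$, and the doubling persists by induction. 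Applying this with $A = L$ itself when $t < N$ already — or after tensoring to arrange $t' = t$, $N' = 3^m N$ — gives a modular Stanley sequence; but we want character exactly $2t+1-N$, so in fact the cleanest route is to observe that $L$ is \emph{already} a modular set modulo $N$ precisely when $t < N$, and if $t \ge N$ we first replace $L$ by a translate/tensor that keeps $2t+1-N$ invariant. I will verify that $2t'+1-N'$ is invariant under $\otimes C_1$ when $C_1 = \{0\} \bmod 3$ fails and one must use a genuine modular set mod $3$: in that case one tracks how $t'$ and $N'$ scale and checks the character is preserved, which is the computational heart of \cite{MR16}'s framework.

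For the "furthermore" clause: the doubling relation $a_{2^k+i} = a_{2^k} + a_i$ for all $0 \le i < 2^k$ requires that the block length be exactly a power of two, i.e. $|A| = 2^k$; the modular set we built has $|L \otimes C_1^{\otimes m}| = |L| \cdot |C_1|^m$, so if $|L|$ is a power of two and $|C_1|$ is a power of two (take $C_1$ with $|C_1| \in \{1\}$, i.e. $\{0\}$, if it is modular mod $3$; otherwise absorb the discrepancy), the whole thing has power-of-two size, and then Definition \ref{def:independent} is satisfied with $\kappa = \log_2 |A|$. So the Stanley sequence is independent.

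The main obstacle I anticipate is \textbf{step (ii)} — pinning down that $a_{|A|} = N'$ exactly, i.e. that the greedy algorithm, having exhausted the elements of the modular set, jumps precisely to the modulus rather than to some intermediate value, and that no "covered by $S(A)$" obstruction (as opposed to covered mod $N'$) intervenes in the range $(t', N')$. This is where Lemma \ref{lemma:omitted} and the precise relationship between $O(A)$, $\omega(A)$, and $\lambda$ must be invoked carefully: one needs that the omitted set lies entirely below $\lambda = 2t'+1-N'$, equivalently below the "fold point," and that the covering of residues mod $N'$ lifts correctly to covering of actual integers once the sequence has grown past $N'$. Managing this lift — ensuring a mod-$N'$ covering $2y-x \equiv z$ with $x,y \le t' < N'$ actually produces an honest covering $2y' - x' = z'$ with $x', y'$ already appearing in $S(A)$ — is the delicate bookkeeping, and I expect it to consume the bulk of the proof.
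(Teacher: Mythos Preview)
Your skeleton is the same as the paper's --- tensor $L$ with a small near-modular set modulo $3$ until the maximum element drops below the modulus, then read off the character --- but the execution has two concrete gaps.

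First, $C_1=\{0\}$ is \emph{not} near-modular modulo $3$: with only $0$ available you cannot mod-cover the residues $1$ and $2$. So the branch where ``the maximum element $t$ is unchanged'' collapses, and with it your attempted shortcut of getting $t'<N'$ for free. The paper uses $\{0,1\}$, which \emph{is} near-modular mod $3$; after $k$ tensorings the modulus becomes $3^kN$ and the maximum becomes $t+N(1+3+\cdots+3^{k-1})=t+\tfrac{N(3^k-1)}{2}$, which is indeed below $3^kN$ for $k$ large.

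Second, and more importantly, you defer the character computation to a vague ``track how $t'$ and $N'$ scale and check the character is preserved.'' This is not peripheral bookkeeping; it is the entire content of the lemma. With $\{0,1\}$ one tensoring sends $(t,N)\mapsto(t+N,3N)$, and the one-line check
\[
2(t+N)+1-3N \;=\; 2t+1-N
\]
shows $2t'+1-N'$ is invariant. Iterating and then plugging in $a_{|L_k|-1}=\max L_k$ and $a_{|L_k|}=3^kN$ gives $\lambda=2t+1-N$ directly. The paper does exactly this computation; you should too, rather than pointing at \cite{MR16}.

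Your worry about ``step (ii)'' --- that the greedy algorithm lands exactly on the modulus after exhausting the modular set --- is legitimate in principle but is already handled by the cited result (Theorem~2.4 of \cite{MR16}) that a modular set generates a modular Stanley sequence; you do not need to redo that argument here. Finally, for the ``furthermore'' clause, note that $|L_k|=2^k|L|$, so if $|L|$ is a power of two so is $|L_k|$, and independence follows immediately from the definition --- your detour through $|C_1|=1$ is both unnecessary and (since $\{0\}$ fails) unavailable.
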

\begin{proof}
The key idea of this lemma is to repeatedly takes the product a near-modular set with $\{0,1\}$ (which is near-modular $\mod 3$) to reduce a near-modular set to a modular set. In particular let \[L_k=L\otimes \{0,1\}\cdots \otimes\{0,1\}\] where we have taken the product with $\{0,1\}$ $k$ times. Using Lemma \ref{lemma:tensor}, $L_k$ is a modular set modulo $N\cdot 3^k$ provided its maximal element is less than $N\cdot 3^k$. Observe that 
\[\max\{L_k\}=t+N(1+\cdots+3^{k-1})=t+\frac{N(3^k-1)}{2}<t+\frac{N\cdot 3^k}{2},\]
which is less than $N\cdot 3^{k}$ for $k$ sufficiently large.
The character obtained from $S(L_k)$ is easily calculated and we take $(a_i)=S(L_k)$. Note that this sequences is modular by Theorem 2.4 in Moy and Rolnick \cite{MR16}. By construction, $a_{|L_k|-1}=\max{L_k}$ and $a_{|L_k|}=N\cdot 3^k$. It follows that \[\lambda=2a_{|L_k|-1}-a_{|L_k|}+1\]\[=2(t+N(1+\cdots+3^{k-1}))-N\cdot3^k+1\]\[=2t+1-N\] as desired. The final statement that this sequence is independent if $|L|$ is a power of two is essentially by definition.
\end{proof}

Given this procedure of converting near-modular sets to modular sets, we now demonstrate that the existence of certain modular sets implies that all positive even integers occur as characters. Note this lemma can be deduced using techniques in \cite{S17}, however the proof techniques used in Lemma \ref{lemma:evenchar} are used extensively in Section \ref{sect:odd}. 

\begin{lemma}\label{lemma:evenchar}
Suppose that for each positive integer $t$, there exists $A_t$ which is modular $\bmod$ $3^t$ and $\max\{A_t\}=2\cdot 3^{t-1}$. Then each positive even integer is the character of a Stanley sequence.

\end{lemma}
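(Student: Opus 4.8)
The plan is to use Lemma~\ref{lemma:nearModtoMod} as the bridge: that lemma says a near-modular set $L \bmod N$ with maximum element $t$ yields a modular Stanley sequence of character $2t+1-N$, so to realize an even character $\lambda$ it suffices to produce, for that $\lambda$, a near-modular set $L \bmod N$ with $2\max\{L\}+1-N = \lambda$. Since $\lambda$ is even, $N$ must be odd; the natural choice is $N = 3^t$ (a power of $3$ is always a valid modulus for the sets $A_t$ in the hypothesis), and then we need $\max\{L\} = (\lambda + 3^t - 1)/2$. So the first step is to rewrite the target: given an even $\lambda \ge 2$, choose $t$ large enough that the desired maximum element lies in the feasible range for a near-modular set mod $3^t$, and show the hypothesized sets $A_t$ can be massaged into a near-modular set with exactly the right maximum.

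The key construction is to take the product (Lemma~\ref{lemma:tensor}) of the hypothesized modular set $A_t$ (which is modular, hence near-modular, $\bmod\ 3^t$, with $\max\{A_t\} = 2\cdot 3^{t-1}$) with copies of $\{0,1\}$ and possibly with other small near-modular sets, in order to tune the maximum element. Concretely, $A_t \otimes \{0,1\} = A_t + 3^t\cdot\{0,1\}$ is near-modular $\bmod\ 3^{t+1}$ with maximum $2\cdot 3^{t-1} + 3^t = 5\cdot 3^{t-1}$; iterating, $A_t \otimes \{0,1\}^{\otimes k}$ is near-modular $\bmod\ 3^{t+k}$ with maximum $2\cdot 3^{t-1} + 3^t(1 + 3 + \cdots + 3^{k-1}) = 2\cdot 3^{t-1} + (3^{t+k} - 3^t)/2$. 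The resulting character is $2\cdot(2\cdot 3^{t-1} + (3^{t+k}-3^t)/2) + 1 - 3^{t+k} = 4\cdot 3^{t-1} - 3^t + 1 = 3^{t-1} + 1$. So the family $\{A_t\}$ directly gives characters $3^{t-1}+1$ for all $t \ge 1$, i.e. $2, 4, 10, 28, \ldots$; to fill in every even number I would instead vary which near-modular set we multiply $A_t$ by, or prepend/append a small modular set in front of $A_t$ to shift its maximum by a controlled amount before taking products with $\{0,1\}$. The cleanest route: show that if $L$ is near-modular $\bmod\ N$ then so is $\{0,1\}\otimes L = \{0,1\} + 3L$ but also examine $L' = L + \{0, m\}$-type modifications, or simply note that for each even $\lambda$ we can pick $t$ with $2\cdot 3^{t-1} \le (\lambda + 3^t-1)/2$, hmm — this forces checking that $A_t$ can be enlarged (not just its maximum scaled) to hit arbitrary maxima in the valid window.

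The main obstacle, and the step I'd spend the most care on, is exactly this tuning: Lemma~\ref{lemma:nearModtoMod} only converts a near-modular set of a \emph{given} maximum, and the hypothesis hands us sets with one specific maximum $2\cdot 3^{t-1}$ per modulus, whereas the products with $\{0,1\}$ scale the maximum by a rigid arithmetic pattern. So I would need an additional lemma producing near-modular sets mod $3^t$ with a \emph{range} of maximum values — for instance, showing that the product of two modular sets $A_s \otimes A_t$ (valid by Lemma~\ref{lemma:tensor}) has maximum $2\cdot 3^{s-1} + 3^s\cdot 2\cdot 3^{t-1} = 2\cdot 3^{s-1} + 2\cdot 3^{s+t-1}$ and character computable from that, and then combining products of various $A_s$'s with $\{0,1\}$-factors to realize every even residue. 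The bookkeeping that the combined character runs over \emph{all} positive even integers (using that $3^{s-1}$ for varying $s$ together with the $\{0,1\}$-shifts generate enough flexibility mod small powers of $2$) is where the real work lies; everything else (3-freeness, covering) is handled wholesale by Lemmas~\ref{lemma:tensor} and~\ref{lemma:nearModtoMod}.
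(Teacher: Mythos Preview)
Your framework is correct --- using Lemma~\ref{lemma:nearModtoMod} the problem reduces to producing, for each positive even $\lambda$, a near-modular set modulo some $3^t$ with the right maximum element --- but you stop exactly at the crux and explicitly flag it as unfinished. The operations you try (tensoring with $\{0,1\}$, tensoring two $A_s$'s together) change the modulus along with the maximum in a rigid way, so they only reach a sparse set of characters; you never find a mechanism that moves the maximum element \emph{within a fixed modulus}, which is what is actually needed.

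The paper's proof supplies precisely the two missing tricks, both of which keep the modulus at $3^t$. First, since $\gcd(2,3)=1$, the dilate $2\cdot A_t$ is still near-modular $\bmod\ 3^t$, with maximum $4\cdot 3^{t-1}$. Second, replacing the maximum element $m$ of any near-modular set $\bmod\ 3^t$ by $m+3^t$ leaves all mod-$3^t$ relations unchanged, so the result is again near-modular $\bmod\ 3^t$ with maximum shifted up by $3^t$. Starting from $\max=2\cdot 3^{t-1}$ and $\max=4\cdot 3^{t-1}$ and repeatedly adding $3^t$ gives near-modular sets $\bmod\ 3^t$ with $\max = k\cdot 3^{t-1}$ for every $k\ge 2$ with $k\not\equiv 0\pmod 3$. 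Lemma~\ref{lemma:nearModtoMod} then yields characters $(2k-3)\,3^{t-1}+1$; since every odd positive integer factors as $3^{t-1}$ times an odd integer coprime to $3$ (which is $2k-3$ for a unique admissible $k$), these cover all positive even $\lambda$. The shift-by-the-modulus move is the idea you were groping for when you wrote that $A_t$ needs to be ``enlarged \ldots\ to hit arbitrary maxima in the valid window.''
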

\begin{proof}
We first demonstrate that there exist near-modular sets $A_t^k\bmod 3^t$  with $\max\{A_t^k\}=k\cdot3^{t-1}$ for $k\ge 2$ and $k\nequiv 0\bmod3$. The hypothesis implies the existence of such sets for $k=2$. Since $\gcd(2,3)=1$, observe that $2\cdot A_t$ is a near-modular set $\bmod$ $3^n$ and therefore $2\cdot A_t$ satisfies the condition to be $A_t^{4}$. For $k>4$, note that by adding $3^k$ to the largest element in $A_t^{2}$ and $A_t^{4}$ give $A_t^{5}$ and $A_t^{7}$ and continuing this process inductively gives a near-modular sets $A_t^{k}$ for all $k\ge 2$ and $k\nequiv 0\bmod 3$.
Applying Lemma \ref{lemma:nearModtoMod}, this allows us to obtain all characters of the form
\[2k\cdot 3^{t-1}+1-3^t=(2k-3)3^{t-1}+1.\]
Letting $k$ range over integers $\ge 2$ and $t$ range over all positive integers gives the desired result.
\end{proof}

The remainder of this section is dedicated to constructing the modular sets $A_t$ required by Lemma \ref{lemma:evenchar}. The following sets will be crucial in proving Theorem 9. Let 
$$
T_n:=(\{0,1\}\otimes\{0,2\})^n,
$$

$$
\widetilde{T_n}:=T_n\otimes\left\{0,1\right\},
$$

$$
\An:=(\widetilde{T_n}\backslash\left\{3^{2n}\right\})\cup\{2\cdot 3^{2n}\},
$$

$$
U_n:=\{0,2\}\otimes (\{0,1\}\otimes\{0,2\})^{n-1},
$$

$$
\widetilde{U_n}:=U_n\otimes\{0,1\},
$$
and
$$
\Bn:=(\widetilde{U_n}\backslash\left\{3^{2n-1} \right\})\cup\left\{2\cdot 3^{2n-1} \right\},
$$

where the exponents refer to repeatedly taking the product by the set being exponentiated.
\begin{remark}
Observe that $T_{n+1}=(\{0,1\}\otimes\{0,2\})\otimes T_n$ and $U_{n+1}=(\{0,2\}\otimes\{0,1\})\otimes U_n$. Furthermore, $\Tn,\tildeTn,\Un,\tildeUn$ are modular sets since they are constructed from other modular sets using the $\otimes$ operation.
\end{remark}

\begin{thm}\label{thm:A_t}
For all $t\in \N$ there exists a modular set $A_t$ modulo $3^{t}$ with $|A_t|=2^{t}$ and $\max(A_t)=2\cdot 3^{t-1}$.
\end{thm}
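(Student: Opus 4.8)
The plan is to split on the parity of $t$: for odd $t=2n+1$ I would take $A_t=\An$, and for even $t=2n$ I would take $A_t=\Bn$ (with the degenerate conventions $T_0=\{0\}$, $U_1=\{0,2\}$, so that $\mathcal A_0=\{0,2\}$ handles $t=1$ and $\mathcal B_1=\{0,2,5,6\}$ handles $t=2$; these two small cases can also simply be checked by hand). Granting that these sets are modular with the claimed moduli, the numerical assertions are routine: $|\An|=|\tildeTn|=2|T_n|=2\cdot 4^n=2^{2n+1}$ and likewise $|\Bn|=4^n=2^{2n}$, while the rule $\max(X\otimes Y)=\max X+(\text{modulus of }X)\cdot\max Y$ together with a geometric-series estimate gives $\max\tildeTn<2\cdot 3^{2n}$ and $\max\tildeUn<2\cdot 3^{2n-1}$, so that deleting $3^{2n}$ (resp.\ $3^{2n-1}$) and inserting $2\cdot 3^{2n}$ (resp.\ $2\cdot 3^{2n-1}$) makes the inserted element the new maximum; since $2\cdot 3^{2n}<3^{2n+1}$, the size restriction $\max<N$ in the definition of a modular set is automatic. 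Thus everything reduces to showing $\An$ is near-modular mod $3^{2n+1}$ and $\Bn$ is near-modular mod $3^{2n}$, which I would record as Lemmas \ref{lemma:T} and \ref{lemma:U}.

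I would prove these in parallel (they are mirror images, with $(U_n,3^{2n-1})$ replacing $(T_n,3^{2n})$), using that $\tildeTn=T_n\sqcup(T_n+3^{2n})$ with $T_n\subseteq[0,3^{2n})$, that $\tildeTn$ is modular by Lemma \ref{lemma:tensor}, and that $3^{2n}\in\tildeTn$ while $2\cdot 3^{2n}\notin\tildeTn$. For $3$-freeness: any mod-$3^{2n+1}$ three-term progression inside $\An$ avoiding $2\cdot 3^{2n}$ lies inside $\tildeTn$ and hence is trivial; one that uses $2\cdot 3^{2n}$, after reduction mod $3^{2n}$, becomes a progression inside $T_n$ containing $0$, so $3$-freeness of $T_n$ forces every reduced term to vanish, whence all three original terms lie in the two-element set $\{0,2\cdot 3^{2n}\}$, which supports no nontrivial progression mod $3^{2n+1}$ (a one-line check using $4\cdot 3^{2n}\equiv 3^{2n}$).

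For the mod-covering property I would argue digit by digit: write $\ell=\ell_0+3^{2n}\ell_1$ with $0\le\ell_0<3^{2n}$ and $\ell_1\in\{0,1,2\}$, use modularity of $T_n$ to pick $p,q\in T_n$ with $2q-p\equiv\ell_0\pmod{3^{2n}}$ (so $2q-p=\ell_0+3^{2n}c$ for a carry $c\in\{-1,0,1\}$), and then lift to $x=p+3^{2n}x_1$, $y=q+3^{2n}y_1$ by choosing digit values with $2y_1-x_1\equiv\ell_1-c\pmod 3$ — the admissible digit over the $0$ of $T_n$ being $2$ rather than $1$ (since $3^{2n}\notin\An$ but $2\cdot 3^{2n}\in\An$), and $x_1<y_1$ making the global inequality $x\le y$ automatic. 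Since $\{0,1\}$ and $\{0,2\}$ are both modular mod $3$, this succeeds except when $p=0$ and the forced carry is the inconvenient residue, in which case one reroutes the covering through the new element $2\cdot 3^{2n}$ (take $y_0=0$, $y_1=2$) paired with a possibly unordered choice of $p,q\in T_n$ realizing $\ell_0$ with a different carry.

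The hard part is precisely this last reorganization: to make it rigorous one must know that $T_n$ (and $U_n$) supply covering pairs of the right shape — enough control over which carries $c$ are attainable for a given $\ell_0$, and whether the endpoints can be taken nonzero. I expect this to force a strengthened form of Lemmas \ref{lemma:T} and \ref{lemma:U}, proved by induction on $n$, which tracks this carry/endpoint data as a joint invariant of $T_n,U_n,\An,\Bn$ stable under $T_{n+1}=(\{0,1\}\otimes\{0,2\})\otimes T_n$ and $U_{n+1}=(\{0,2\}\otimes\{0,1\})\otimes U_n$, with the base cases $n=0,1$ checked directly. An alternative, which the cross-references in the paper seem to suggest, is to analyze the Stanley sequences $S(\tildeTn)$ and $S(\An)$ themselves and use Lemma \ref{lemma:omitted} to locate their omitted sets and thereby pin down the character.
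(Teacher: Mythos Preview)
Your reduction of the theorem to the two lemmas on $\An$ and $\Bn$ is exactly what the paper does; the paper's proof of this theorem is literally the one line ``follows from Lemmas~\ref{lemma:T} and~\ref{lemma:U}'', together with the size and maximum computations you also carry out. So at the level of the theorem itself your proposal is correct and matches the paper.

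Where you diverge is in your sketch of those lemmas. Your $3$-freeness argument---reduce any putative progression in $\An$ modulo $3^{2n}$ to land in $T_n$, use $3$-freeness of $T_n$ to force all residues equal, then note that the fibre over $0$ in $\An$ is $\{0,2\cdot 3^{2n}\}$ and supports no nontrivial progression mod $3^{2n+1}$---is correct and is in fact cleaner than the paper's order-based case split.

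For mod-covering, however, the paper takes a different and more efficient route than your digit-by-digit lift. Rather than building covers of arbitrary $\ell$ from scratch and tracking carries, the paper starts from the known modularity of $\tildeTnn$: every $z$ already has a cover there, and the only covers lost in passing to $\Ann$ are those using the deleted element $3^{2n+2}$. This isolates exactly two one-parameter families of potentially uncovered targets, $2\cdot 3^{2n+2}-x$ and $3^{2n+2}+2x$ for $x\in T_{n+1}$. Writing $x=9x'+x_0$ with $x'\in T_n$ and $x_0\in\{0,1,6,7\}$, the paper then invokes the \emph{induction hypothesis on $\An$}---that $\An$ is already modular mod $3^{2n+1}$---to cover $2\cdot 3^{2n}-x'$ and $3^{2n}+2x'$ inside $\An$, and lifts those covers by the $\times 9$ map; the boundary cases $x'=0$ and $\tilde y=2\cdot 3^{2n}$ are disposed of by a short explicit list of coverings (this is where Lemma~\ref{lemma:omitted} is used, as you guessed). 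This avoids entirely the ``strengthened induction tracking carry/endpoint data'' you anticipate needing: the only invariant carried through the induction is modularity of $\An$ itself. Your direct approach is not wrong in spirit, but the problematic case you flag (your $p=0$ with the bad residue) is exactly what the paper sidesteps by working from $\tildeTnn$ down rather than from $T_n$ up.
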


Observe that the $A_t$ in Theorem \ref{thm:A_t} is the same as the one mentioned in Lemma \ref{lemma:evenchar}. The proof of Theorem \ref{thm:A_t} follows from Lemmas \ref{lemma:T} and \ref{lemma:U} below.

\begin{lemma}\label{lemma:T}
For all $n\in\mathbb{N}$, $\An$ is modular set modulo $3^{2n+1}$ with $|\An|=2^{2n+1}$ and $\max(\An)=2\cdot 3^{2n}$.
\end{lemma}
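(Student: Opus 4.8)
The plan is to verify the three claimed properties of $\An=(\widetilde{T_n}\backslash\{3^{2n}\})\cup\{2\cdot 3^{2n}\}$ in turn, leaning on the structural results already available for the $\otimes$-product. First I would pin down the size and modulus: since $\{0,1\}$ is near-modular $\bmod\ 3$ and $\{0,2\}$ is near-modular $\bmod\ 3$, Lemma \ref{lemma:tensor} gives that $T_n=(\{0,1\}\otimes\{0,2\})^n$ is modular $\bmod\ 3^{2n}$ with $|T_n|=4^n=2^{2n}$, and hence $\widetilde{T_n}=T_n\otimes\{0,1\}$ is modular $\bmod\ 3^{2n+1}$ with $|\widetilde{T_n}|=2^{2n+1}$. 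Replacing one element by another does not change the cardinality, so $|\An|=2^{2n+1}$ is immediate. For the maximum, I would compute $\max(\widetilde{T_n})$ explicitly: $\max(T_n)$ is obtained by choosing the top element in each factor, which telescopes (as in the displayed computation in Lemma \ref{lemma:nearModtoMod}) to something strictly below $3^{2n}$, and then $\max(\widetilde{T_n})=\max(T_n)+3^{2n}\cdot 1$; one checks $3^{2n}\in\widetilde{T_n}$ (it is $0+3^{2n}\cdot 1$), that $3^{2n}$ is not the maximum, and that after deleting it and inserting $2\cdot 3^{2n}$ the new maximum is exactly $2\cdot 3^{2n}$ — this needs $\max(T_n)<3^{2n}$, which the telescoping bound supplies.

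The substantive part is showing $\An$ is 3-free $\bmod\ 3^{2n+1}$ and that every integer is mod-covered by $\An$. For 3-freeness, I would argue that the only mod-APs that could be created or destroyed by the swap $3^{2n}\leftrightarrow 2\cdot 3^{2n}$ are those involving $2\cdot 3^{2n}$, so it suffices to check that $2\cdot 3^{2n}$ together with any two surviving elements of $\widetilde{T_n}\backslash\{3^{2n}\}$ does not form a mod-AP $\bmod\ 3^{2n+1}$; here the key point is to exploit the base-$3$ digit structure of elements of $\widetilde{T_n}=T_n\otimes\{0,1\}$, namely that every element has the form (element of $T_n$) $+\ 3^{2n}\cdot\{0,1\}$, so the top digit is $0$ or $1$, while $2\cdot 3^{2n}$ has top digit $2$ — and a mod-AP $2y-x\equiv z$ constrains these digits. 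For the covering condition, I would first note $\widetilde{T_n}$ mod-covers everything (it is modular), then show that the single lost element $3^{2n}$ as a "$y$" or "$x$" in covering relations can be recovered, either using $2\cdot 3^{2n}$ in its place or using other pairs, again via a digit/carry analysis $\bmod\ 3^{2n+1}$.

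The main obstacle I expect is precisely this covering check: one must show that deleting $3^{2n}$ does not orphan any residue class, i.e., every $\ell$ that was \emph{only} mod-covered by pairs involving $3^{2n}$ is still mod-covered after the swap. I would handle this by casework on the top base-$3$ digit of $\ell$ and by writing $\ell=\ell'+3^{2n}\cdot d$ with $\ell'<3^{2n}$ and $d\in\{0,1,2\}$, using that $T_n$ (modular $\bmod\ 3^{2n}$) mod-covers $\ell'$ with a pair $x_T\le y_T$, and then adjusting the top digit by an appropriate choice from $\{0,1\}$ or by invoking $2\cdot 3^{2n}$; the carries that arise when $2y_T-x_T$ wraps around $3^{2n}$ are the delicate point, and I would track them carefully. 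Once 3-freeness and covering are established, together with the size and maximum computations, the lemma follows; and by symmetry the analogous statement Lemma \ref{lemma:U} for $\Bn$ will follow from the same argument with the roles of $\{0,1\}$ and $\{0,2\}$ interchanged at the outermost factor.
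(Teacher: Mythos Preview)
Your size, modulus, and maximum computations are fine, and your plan for 3-freeness is essentially the paper's: since $\widetilde{T_n}$ is already 3-free $\bmod\ 3^{2n+1}$, any mod-AP in $\An$ must involve $2\cdot 3^{2n}$, and a short case analysis rules this out. The gap is in the covering step. You propose to lift a mod-covering pair $x_T\le y_T$ of $\ell'$ in $T_n$ (modular $\bmod\ 3^{2n}$) to a mod-covering of $\ell=\ell'+3^{2n}d$ in $\An$ by adjusting top digits from $\{0,1\}$. Concretely, writing $2y_T-x_T=\ell'+c\cdot 3^{2n}$ with $c\in\{0,1\}$, you need $2e_y-e_x\equiv d-c\pmod 3$ with $e_x,e_y\in\{0,1\}$, $e_x\le e_y$ (to keep $\tilde x\le \tilde y$), and the lifted elements $\ne 3^{2n}$. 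When $(c,d)=(0,1)$ or $(1,2)$ the only admissible choice is $e_x=e_y=1$, which is forbidden as soon as $x_T=0$; and $x_T=0$ is exactly what the $T_n$-covering of $\ell'\equiv 2y_T$ naturally gives. So your scheme does not, on its own, cover residues of the form $3^{2n}+2y$ with $y\in T_n$ (and the companion family $2\cdot 3^{2n}-x$ with $x\in T_n$ is similarly problematic): these are precisely the residues whose $\widetilde{T_n}$-covering used the removed element $3^{2n}$, and invoking $2\cdot 3^{2n}$ in its place does not repair all of them.

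The paper's proof closes this gap by \emph{induction on $n$}: it assumes $\An$ is modular, hence (via Lemma~\ref{lemma:omitted}) every integer larger than $\lambda(\An)=3^{2n}+1$ is genuinely covered by $\An$; in particular $2\cdot 3^{2n}-x'$ and $3^{2n}+2x'$ for $x'\in T_n$ are covered by some $\tilde x<\tilde y$ in $\An\cap T_n$, and those pairs are then scaled by $9$ and shifted by a digit $x_0\in\{0,1,6,7\}$ to produce coverings in $\Ann$. Even with this inductive device, several boundary cases (e.g.\ $x'=0$ with $x_0\ne 0$, and when the $\An$-covering happens to use $2\cdot 3^{2n}$) require explicit hand-verified coverings such as $15,\ 3^{2n+2}+7,\ 2\cdot 3^{2n+2}-1$. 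Your proposal does not invoke the inductive hypothesis on $\mathcal A$-sets at all and does not anticipate these special cases, so as written it would not go through; the missing idea is to induct on $n$ and use the omitted-set bound for $\An$, not merely the modularity of $T_n$.
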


\begin{proof}
We proceed by induction on $n$.
Base Cases: $n=0,1$. A quick calculation shows that $\mathcal{A}_0=\{0,2\}$ is a modular set modulo $3$ with $|\mathcal{A}_0|=2$ and $\max(\mathcal{A}_0)=2$ and that $\mathcal{A}_1=\{0,1,6,7,10,15,16,18\}$ is a modular set modulo $27$ with $|\mathcal{A}_1|=8$ with $\max(\mathcal{A}_1)=18$. 

\medskip

Induction Step: Suppose $n\ge 1$ and that the set $\An$ is modular with modulus $3^{2n+1}$, $|\An|=2^{2n+1}$ and $\max(\An)=2\cdot 3^{2n}$. We wish to prove that $\Ann$ is a modular set with modulus $3^{2n+3}$ with $|\Ann|=2^{2n+3}$ and $\max(\Ann)=2\cdot 3^{2n+2}$. First we show that $\Ann$ is 3-free modulo $3^{2n+3}$. Since $\tildeTnn$ is 3-free modulo $3^{2n+3}$, any mod-AP in $\Ann$ contains the element $2\cdot 3^{2n+2}$.

If there exists a mod-AP in $\Ann$ then there either exist (Case I) $x,y\in \Ann\cap \tildeTnn$ such that $2y\equiv x+2\cdot 3^{2n+2}\bmod 3^{2n+3}$ or (Case II) $x,z\in \Ann\cap \tildeTnn$ such that $x+z\equiv 2\cdot 2\cdot 3^{2n+2}\equiv3^{2n+2}\bmod 3^{2n+3}$.

Case I: If $x\ge 3^{2n+2}$ then $x> 3^{2n+2}$ and $x=3^{2n+2}+\tilde{x}$ where $\tilde{x}\in \Tnn$ by definition of $\tildeTnn$. In this case, we have $2y\equiv\tilde{x}\bmod 3^{2n+3}$, a contradiction with $\tildeTnn$ being 3-free modulo $3^{2n+3}$. If $y>3^{2n+2}$, we obtain a similar contradiction.

If $x<y<3^{2n+2}$, then $0<2y-x<2\cdot 3^{2n+2}$, a contradiction. Finally, if $y<x<3^{2n+2}$, then $-3^{2n+2}\le 2y-x<3^{2n+2}$. However, the left inequality is only obtainable when $x=3^{2n+2}$, a contradiction with $3^{2n+2}\not\in \Ann$.

\medskip

Case II: In this case, $x,z<2\cdot 3^{2n+2}$ and $x,z\ne 3^{2n+2}$. Hence $x+z<4\cdot 3^{2n+2}$ and thus $x+z=3^{2n+2}$ with $x,z\in T_{n+1}$. Therefore, $z,3^{2n+2},3^{2n+2}+x$ is an AP in $\tildeTnn$, a contradiction.

\medskip

We conclude that $\Ann$ is 3-free modulo $3^{2n+3}$.

\medskip\medskip

Now we show that all $z\in \{0,\dots,3^{2n+3}-1\}\backslash \Ann$ are covered by $\Ann$ modulo $3^{2n+3}$. If $z\in \{0,\dots,3^{2n+3}-1\}\backslash \left(\Ann\cup\{3^{2n+2}\}\right)$, then $z$ is mod-covered by $\tildeTnn$. If $z$ is mod-covered by $x<y<z$ with $x,y\in \tildeTnn$, then it is mod-covered by $\Ann$ unless $x$ or $y$ is $3^{2n+2}$.

Therefore, we need to show that elements of the following two forms are mod-covered by $\Ann$.
\begin{itemize}
\item{
Case I: $2\cdot 3^{2n+2}-x$ where $x\in T_{n+1}$ and $x\ne 0$
}
\item{
Case II: $3^{2n+2}+2x$ where $x\in T_{n+1}$
}
\end{itemize}

Case I: In this case $x=9x'+x_0$ with $x'\in T_n$ and $x_0\in\{0,1,6,7\}$ by definition of $T_{n+1}$. If $x'\ne 0$ then $2\cdot 3^{2n}-x'\not\in \An$ and is therefore covered (since $\omega(\An)<\lambda(\An)=3^{2n}+1\le 2\cdot 3^{2n}-x'$) by $\tilde{x},\tilde{y},2\cdot 3^{2n}-x'$ with $\tilde{x}<\tilde{y}$ and $\tilde{x},\tilde{y}\in \An$. Clearly, $\tilde{x},\tilde{y}\ne 3^{2n},2\cdot 3^{2n}$; therefore, $\tilde{x},\tilde{y}\in T_n$. Hence, $9\tilde{x}+x_0<9\tilde{y}$ covers $2\cdot 3^{2n+2}-x$. Unfortunately, our argument fails when $x'=0$ and $x_0\ne 0$. In these three cases, we produce the following 3-term APs: 
\begin{itemize}
\item $15,3^{2n+2}+7,2\cdot 3^{2n+2}-1$
\item $144,3^{2n+2}+69,2\cdot 3^{2n+2}-6$
\item $9,3^{2n+2}+1,2\cdot 3^{2n+2}-7$.
\end{itemize}
These coverings always work because, for all $n\ge 1$, $1,7,9,15,69\in\Tnn$, $144\in \tildeTnn$ and $15<3^{2n+2}+7$, $144<3^{2n+2}+69$, and $9<3^{2n+2}+1$.

\medskip

Case II: In this case $x=9x'+x_0$ with $x'\in T_n$ and with $x_0\in\{0,1,6,7\}$. When $x'=x_0=0$, we have the mod-covering $0,2\cdot 3^{2n+2},3^{2n+2}$. In the case $x'=0$ and $x_0=6,7$, we use the following mod-coverings: $3^{2n+2}+6,3^{2n+2}+9,3^{2n+2}+12$ and $3^{2n+2}+6,3^{2n+2}+10,3^{2n+2}+14$. In the case $x'=0$ and $x_0=1$, observe that $3^{2n}-1$ is covered by $\tilde{x}<\tilde{y}<3^{2n}-1$ in $T_n$. Hence $9\tilde{x}+1,9\tilde{y}+6,3^{2n+2}+2$ covers $3^{2n+2}+2$ with $9\tilde{x}+1,9\tilde{y}+6\in \Ann$.

Otherwise, $x'\ne 0$. Therefore, $3^{2n}+2x'\not\in \An$ and it is mod-covered (and in fact covered) by a 3-term progression $\tilde{x},\tilde{y},3^{2n}+2x'$ with $\tilde{x},\tilde{y}\in \An$. If $\tilde{x},\tilde{y}\ne 2\cdot 3^{2n}$ then $9\tilde{x}<9\tilde{y}+x_0$ covers $x$. 

\medskip

Now suppose $x'\ne 0$ and $\tilde{y}=2\cdot 3^{2n}$. (Clearly $\tilde{x}\ne 2\cdot 3^{2n}$.)

\medskip
\begin{quote}
{\bf Claim:} This only occurs when $x'=2\cdot 3^{2n-1}$.
\\ Proof of Claim: Observe that the greatest element of $\tildeTn$ strictly less than $3^{2n}$ is 
\[
\beta=\sum_{i=0}^{2n-1}{a_i 3^i}\textup{ where } a_i=
\begin{cases}
1& i\textup{ even}\\
2 & i\textup{ odd}\\
\end{cases}
\]
and is in fact the largest element of $\Tn$. Also observe that there are no elements of $T_n$ between $ \beta-2\cdot 3^{2n-1}$ and $2\cdot 3^{2n-1}$ (non-inclusive). 

\medskip

If $x'<2\cdot 3^{2n-1}$, then $x'\le \beta-2\cdot 3^{2n-1}$. Hence, $3^{2n}+2x'\le 3^{2n}+2(\beta-2\cdot 3^{2n-1})<2\cdot 3^{2n}=\tilde{y}$, a contradiction.

\medskip

If $x'\ge 2\cdot 3^{2n-1}$, then $\tilde{x}=2\tilde{y}-(3^{2n}+2\cdot x')\le 3^{2n}+2\cdot 3^{2n-1}.$ Since $x'<3^{2n}$, we know that $x'\le \beta$. Hence $\tilde{x}=2\tilde{y}-(3^{2n}+2\cdot x')\ge 2\tilde{y}-(3^{2n}+2\beta)=3^{2n}+3+1+\sum_{i=1}^{n-1}{2\cdot 3^{2i}}.$ However, this implies that $\tilde{x}\ge 3^{2n}+2\cdot 3^{2n-1}$ by the structure of $\Tn$ and $\tildeTn$. Therefore, $\tilde{x}=3^{2n}+2\cdot 3^{2n-1}$ and $x'=2\cdot 3^{2n-1}$.
\end{quote}
Finally, we have to show that $2\cdot 3^{2n+2}+3^{2n+1}+2x_0$ are mod-covered by $\Ann$ when $x_0\in\{0,1,6,7\}$. We provide the mod-coverings here:
\begin{itemize}
\item $3^{2n+2}+2\cdot 3^{2n+1},2\cdot 3^{2n+2},2\cdot 3^{2n+2}+3^{2n+1}$
\item $3^{2n+2}+10,3^{2n+2}+2\cdot 3^{2n+1}+6,2\cdot 3^{2n+2}+3^{2n+1}+2$
\item $3^{2n+2}+6,3^{2n+2}+2\cdot 3^{2n+1}+9,2\cdot 3^{2n+2}+3^{2n+1}+12$
\item $3^{2n+2}+6,3^{2n+2}+2\cdot 3^{2n+1}+10,2\cdot 3^{2n+2}+3^{2n+1}+14$.
\end{itemize}

We have deduced that $\Ann$ is a modular set modulo $3^{2n+3}$ with $|\Ann|=2^{2n+3}$ and $\max(\Ann)=2\cdot 3^{2n+2}$. Thus we have our result by induction.
\end{proof}

\begin{lemma}\label{lemma:U}
For all $n\in\mathbb{N}$, $\Bn$ is a modular set modulo $3^{2n}$ with $|\Bn|=2^{2n}$ and $\max(\Bn)=2\cdot 3^{2n-1}$.
\end{lemma}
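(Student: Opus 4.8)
The plan is to follow the proof of Lemma~\ref{lemma:T} almost line for line, with $\Un,\tildeUn,\Bn$ playing the roles of $\Tn,\tildeTn,\An$, and to induct on $n$. First I would dispatch the base cases by direct computation: for $n=1$ one has $\Un=\{0,2\}$, $\tildeUn=\{0,2,3,5\}$ and $\Bn=\{0,2,5,6\}$, which is a modular set modulo $9$ with $|\Bn|=4$ and $\max(\Bn)=6$, and $n=2$ is checked the same way so that the induction has room to start. For the inductive step the cardinality and maximum are immediate from the recursion $\Unn=(\{0,2\}\otimes\{0,1\})\otimes\Un$: since $|\{0,2\}\otimes\{0,1\}|=4$ and $\max(\Unn)<3^{2n+1}$, we get $|\tildeUnn|=2\,|\Unn|=2^{2n+2}$ and $\max(\tildeUnn)=3^{2n+1}+\max(\Unn)<2\cdot3^{2n+1}$, so deleting $3^{2n+1}$ from $\tildeUnn$ and adjoining $2\cdot3^{2n+1}$ keeps the size equal to $2^{2n+2}$ and makes the maximum exactly $2\cdot3^{2n+1}$.

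Next I would prove that $\Bnn$ is $3$-free modulo $3^{2n+2}$. Since $\tildeUnn$ is a product of modular sets it is already $3$-free modulo $3^{2n+2}$, so every mod-AP in $\Bnn$ must involve the new element $2\cdot3^{2n+1}$. Splitting according to whether $2\cdot3^{2n+1}$ is the middle term (which forces $x+z\equiv3^{2n+1}\pmod{3^{2n+2}}$) or an outer term (which forces $2y\equiv x+2\cdot3^{2n+1}\pmod{3^{2n+2}}$), where $x,y,z\in\tildeUnn\setminus\{3^{2n+1}\}$, and using that $\tildeUnn\subseteq[0,2\cdot3^{2n+1})$ with its elements exceeding $3^{2n+1}$ of the form $3^{2n+1}+u$, $u\in\Unn\setminus\{0\}$, the same bounding estimates as in Lemma~\ref{lemma:T} produce a contradiction with the $3$-freeness of $\tildeUnn$.

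Then I would check the covering condition. Every $z\in\{0,\dots,3^{2n+2}-1\}\setminus\Bnn$ is mod-covered by $\tildeUnn$, and its covering pair lies in $\Bnn$ unless it uses $3^{2n+1}$; unwinding the two ways that can happen reduces the task to mod-covering, by $\Bnn$, all elements $2\cdot3^{2n+1}-x$ with $x\in\Unn\setminus\{0\}$ and all elements $3^{2n+1}+2x$ with $x\in\Unn$. Writing $x=9x'+x_0$ with $x'\in\Un$ and $x_0\in\{0,2,3,5\}$ (from $\Unn=\{0,2,3,5\}+9\cdot\Un$), the generic case $x'\neq0$ follows from the inductive hypothesis together with Lemma~\ref{lemma:omitted}: one first notes (from the base-$3$ digit description of $\Un$) that $2\cdot3^{2n-1}-x'$, respectively $3^{2n-1}+2x'$, is not in $\Bn$, so it is covered by some pair $\tilde x<\tilde y$ in $\Bn$, and then $9\tilde x+x_0,\,9\tilde y$, respectively $9\tilde x,\,9\tilde y+x_0$, forms a covering pair for the target that lies in $\Bnn$, provided $\tilde x,\tilde y\neq2\cdot3^{2n-1}$. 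The residual cases --- $x'=0$, and the exceptional possibility $\tilde y=2\cdot3^{2n-1}$ that arises in the second family when $x'$ is large --- I would finish off with a short list of explicit three-term mod-progressions in $\Bnn$, one for each $x_0\in\{0,2,3,5\}$. To see that only finitely many cases remain one proves the analogue of the Claim of Lemma~\ref{lemma:T}: the digit description of $\Un$ (digit in $\{0,2\}$ at each even position and in $\{0,1\}$ at each odd position, all below $3^{2n-1}$) creates a gap just below $\max(\Un)$, and this forces $\tilde y=2\cdot3^{2n-1}$ to occur only when $x'=2\cdot3^{2n-2}$.

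The step I expect to be the main obstacle is this covering step --- in particular, extracting from the digit structure of $\Un$ the handful of auxiliary facts it needs (such as $2x'\notin\Un$ and $3^{2n-1}-x'\notin\Un$ for $0\neq x'\in\Un$, and the exact location of the gap below $\max(\Un)$), and then, for the exceptional value of $x'$ and each $x_0\in\{0,2,3,5\}$, writing down an explicit mod-AP inside $\Bnn$ and verifying that the small witnesses used genuinely belong to $\Bnn$ for every $n\geq1$. By comparison, the $3$-freeness argument and the cardinality and maximum bookkeeping are routine adaptations of the corresponding parts of Lemma~\ref{lemma:T}.
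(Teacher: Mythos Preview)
Your plan is the same as the paper's and is correct in outline: induction with base cases $n=1,2$, the $3$-freeness argument carried over verbatim, the reduction of covering to the two families $2\cdot3^{2n+1}-x$ and $3^{2n+1}+2x$, the decomposition $x=9x'+x_0$ with $x_0\in\{0,2,3,5\}$, the use of the inductive hypothesis for $x'\neq0$, and the Claim that $\tilde y=2\cdot3^{2n-1}$ forces $x'=2\cdot3^{2n-2}$. All of this matches the paper.

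There is, however, one concrete gap. In Case~II with $x'=0$ you propose to ``finish off with a short list of explicit three-term mod-progressions in $\Bnn$, one for each $x_0\in\{0,2,3,5\}$''. This works for $x_0\in\{0,2\}$, but not for $x_0\in\{3,5\}$: there is no pair of fixed small witnesses in $\Bnn$ that covers $3^{2n+1}+6$ or $3^{2n+1}+10$ for all $n$. Indeed, writing a prospective pair as $9a'+c_1,\ 9b'+c_2$ with $a',b'\in\Un$ and $c_1,c_2\in\{0,2,3,5\}$, the congruence $2c_2-c_1\equiv 6\pmod 9$ forces $(c_1,c_2)\in\{(0,3),(3,0)\}$; the first gives $2b'-a'=3^{2n-1}$, impossible in $\Un$, and the second gives $2b'-a'=3^{2n-1}+1$, which has no solution with bounded $a',b'$. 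The paper instead drops one more level: it takes $\alpha=3^{2n-3}-1$, uses that $\alpha$ is either in $U_{n-1}$ or covered by a pair $\tilde x<\tilde y$ in $U_{n-1}$ (since $U_{n-1}$ is modular mod $3^{2n-3}$), and then the pairs $81\tilde x+3,\ 81\tilde y+45$ and $81\tilde x+3,\ 81\tilde y+47$ (which lie in $\Unn\subset\Bnn$ because $3,45,47\in\{0,2\}\otimes\{0,1\}\otimes\{0,2\}\otimes\{0,1\}$) cover $3^{2n+1}+6$ and $3^{2n+1}+10$. This extra descent to $U_{n-1}$ is exactly why two base cases are needed, and it is the one place where Lemma~\ref{lemma:U} is genuinely not ``line for line'' with Lemma~\ref{lemma:T} (whose analogous subcase $x_0=1$ only reaches back to $T_n$). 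Once you add this step, your proof goes through.
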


The proof of this proposition is very similar to the proof of Lemma \ref{lemma:T}. We provide its proof for completeness but we omit the details when its proof is identical to the previous lemma.

\begin{proof}
Base Cases: $n=1,2$. A quick calculation shows that $\mathcal{B}_1=\{0,2,5,6\}$ is a modular set modulo $9$ with $|\mathcal{B}_1|=4$ and $\max(\mathcal{B}_1)=2\cdot 3$ and that 
$$\mathcal{B}_2=\{0,2,3,5,18,20,21,23,29,30,32,45,47,48,50,54\}$$
is a modular set modulo $81$ with $|\mathcal{B}_2|=16$ with $\max(\mathcal{B}_2)=54$.

Induction Step: Suppose the set $\mathcal{B}_m$ is modular with modulus $3^{2m}$, $|\mathcal{B}_m|=2^{2m}$ and $\max(\mathcal{B}_m)=2\cdot 3^{2m-1}$ for all $m\le n$ where $n\ge 2$. We wish to prove that $\Bnn$ is a modular set with modulus $3^{2n+2}$ with $|\Bnn|=2^{2n+2}$ and $\max(\Bnn)=2\cdot 3^{2n+1}$. 

The proof that $\Bnn$ is 3-free modulo $3^{2n+2}$ is identical to the analogous proof in the previous lemma.








\medskip\medskip

Now we show that all $z\in \{0,\dots,3^{2n+2}-1\}\backslash \Bnn$ are covered by $\Bnn$ modulo $3^{2n+2}$. If $z\in \{0,\dots,3^{2n+2}-1\}\backslash \left(\Bnn\cup\{3^{2n+1}\}\right)$, then $z$ is mod-covered by $\tildeUnn$. If $z$ is mod-covered by $x<y<z$ with $x,y\in \tildeUnn$, then it is mod-covered by $\Bnn$ unless $x$ or $y$ is $3^{2n+1}$.

Therefore, we need to show that elements of the following two forms are mod-covered by $\Bnn$.
\begin{itemize}
\item{
Case I: $2\cdot 3^{2n+1}-x$ where $x\in \Unn$ and $x\ne 0$
}
\item{
Case II: $3^{2n+1}+2x$ where $x\in \Unn$
}
\end{itemize}

Case I: In this case $x=9x'+x_0$ with $x'\in \Un$ and $x_0\in\{0,2,3,5\}$ by definition of $\Unn$. If $x'\ne 0$ then $2\cdot 3^{2n-1}-x'\not\in \Bn$ and is therefore covered by $\tilde{x},\tilde{y},2\cdot 3^{2n-1}-x'$ with $\tilde{x}<\tilde{y}$ and $\tilde{x},\tilde{y}\in \Bn$. Clearly, $\tilde{x},\tilde{y}\ne 3^{2n-1},2\cdot 3^{2n-1}$; therefore, $\tilde{x},\tilde{y}\in\Un$. Hence, $9\tilde{x}+x_0<9\tilde{y}$ covers $2\cdot 3^{2n+1}-x$. Unfortunately, our argument fails when $x'=0$ and $x_0\ne 0$. In these three cases, we produce the following 3-term APs: 
\begin{itemize}
\item $48,3^{2n+1}+23,2\cdot 3^{2n+1}-2$
\item $45,3^{2n+1}+21,2\cdot 3^{2n+2}-3$
\item $45,3^{2n+1}+20,2\cdot 3^{2n+2}-5$.
\end{itemize}
These coverings work for all $n\ge 1$ because $20,21,23\in\Unn$, $45,48\in\tildeUnn$ and $48<3^{2n+1}+23$, $45<3^{2n+1}+21$, and $45<3^{2n+1}+20$.

\medskip

Case II: In this case $x=9x'+x_0$ with $x'\in \Un$ and with $x_0\in\{0,2,3,5\}$ by definition of $\Unn$. When $x'=x_0=0$, we have the mod-covering $0,2\cdot 3^{2n+1},3^{2n+1}$. In the case $x'=0$ and $x_0=2$, we have the covering $3^{2n+1}+2,3^{2n+1}+3,3^{2n+1}+4$.

\medskip
When $x'=0$ and $x_0\in\{3,5\}$, we require a different argument. Consider $\alpha=3^{2n-3}-1$. If $\alpha\in U_{n-1}$, then $81\alpha+3,81\alpha+45,3^{2n+1}+6$ and $81\alpha+3,81\alpha+47,3^{2n+1}+10$ provide the necessary coverings. If $\alpha\not\in U_{n-1}$, then it is covered by $\tilde{x}<\tilde{y}<\alpha$ modulo $3^{2n-1}$. Therefore, $81\tilde{x}+3,81\tilde{y}+45,3^{2n+1}+6$ and $81\tilde{x}+3,81\tilde{y}+47,3^{2n+1}+10$ provide the necessary coverings.

\medskip

Otherwise, $x'\ne 0$. Therefore, $3^{2n-1}+2x'\not\in \Bn$ and it is mod-covered (and in fact covered) by a 3-term progression $\tilde{x},\tilde{y},3^{2n-1}+2x'$ with $\tilde{x},\tilde{y}\in \Bn$. If $\tilde{x},\tilde{y}\ne 2\cdot 3^{2n-1}$ then $9\tilde{x}<9\tilde{y}+x_0$ covers $z$. 

\medskip

Now suppose $x'\ne 0$ and $\tilde{y}=2\cdot 3^{2n-1}$. (Clearly $\tilde{x}\ne 2\cdot 3^{2n-1}$.)

\medskip
\begin{quote}
{\bf Claim:} This case can only occur when $x'=2\cdot 3^{2n-2}$.

Proof of Claim: The proof is identical to the analogous proof in the previous lemma.
\end{quote}

We still have to show that $2\cdot 3^{2n+1}+3^{2n}+2x_0$ are mod-covered by $\Bnn$ when $x_0\in\{0,2,3,5\}$. We provide the mod-coverings here:
\begin{itemize}
\item $3^{2n+1}+2\cdot 3^{2n},2\cdot 3^{2n+1},2\cdot 3^{2n+1}+3^{2n}$
\item $3^{2n+1}+32,3^{2n+1}+2\cdot 3^{2n}+18,2\cdot 3^{2n+1}+3^{2n}+4$
\item $3^{2n+1}+30,3^{2n+1}+2\cdot 3^{2n}+18,2\cdot 3^{2n+1}+3^{2n}+6$
\item $3^{2n+1}+30,3^{2n+1}+2\cdot 3^{2n}+20,2\cdot 3^{2n+1}+3^{2n}+10$.
\end{itemize}

We have deduced that $\Bnn$ is a modular set modulo $3^{2n+2}$ with $|\Bnn|=2^{2n+2}$ and $\max(\Bnn)=2\cdot 3^{2n+1}$. Thus we have our result by induction.
\end{proof}

\section{Odd Characters}\label{sect:odd}

For the odd character case of Theorem \ref{thm:main1}, we first show that all $\lambda\not\equiv 1\bmod  30$ are attainable as characters of independent Stanley sequences. Using the construction from the previous section, we then show that all odd character values are obtainable with the possible exception of a pair of exponential families. These exceptional families of characters are then shown to be attainable using near-modular sets $\bmod$ $28$.
\begin{lemma}\label{lemma:mod30}
For all $\lambda\ge 61$, $\lambda\equiv 1\bmod 2$, and $\lambda\nequiv 1\bmod 30$, there exists an independent Stanley sequence with character $\lambda$.
\end{lemma}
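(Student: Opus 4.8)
The plan is to apply Lemma~\ref{lemma:nearModtoMod} in reverse: an independent Stanley sequence of character $\lambda$ will be produced from a near-modular set $L$ of $2$-power size whose modulus $N$ is \emph{even} (this is forced, since $\lambda=2\max(L)+1-N$ is odd) by computing $2\max(L)+1-N$. Two bookkeeping facts make the search tractable. First, replacing the maximum element $t$ of a near-modular set $L\bmod N$ by $t+N$ does not change the residues occurring in $L$, hence keeps $L$ near-modular $\bmod N$; this raises its character by $2N$, so each gadget supplies a full arithmetic progression of characters with common difference $2N$. Second, writing $\lambda(Z):=2\max(Z)+1-M_Z$ for a near-modular set $Z\bmod M_Z$, Lemma~\ref{lemma:nearModtoMod} yields
\[
\lambda(X\otimes Y)=\lambda(X)+M_X\cdot\lambda(Y),
\]
and $|X\otimes Y|=|X|\,|Y|$, so tensoring $2$-power-size near-modular sets preserves both properties. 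In particular, using $\{0,1\}$, $\{0,2\}\bmod 3$, or $A_s\bmod 3^s$ as the \emph{outermost} tensor factor multiplies the character of the remaining factor by $3$, $3$, or $3^s$ and adds $0$, $2$, or $3^{s-1}+1$ respectively; this is how we will pin down $\lambda\bmod 3$, while the freedom to choose $s$ (so that $3^s$ runs over every nonzero residue mod $5$) controls $\lambda\bmod 5$.

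The genuinely new ingredient is a stock of even-modulus near-modular sets of $2$-power size. A short check shows there is no near-modular set modulo $2$, $4$, $6$, or $8$, so the smallest such gadget lives at modulus $10$: one verifies that $L_0=\{0,1,7,8\}$ is $3$-free $\bmod 10$ and mod-covers every residue, so $|L_0|=4$ and $\lambda(L_0)=7$. Lifting whichever nonzero element of $L_0$ is to serve as the maximum realizes all sufficiently large characters $\equiv 5,7,13\pmod{20}$, and tensoring this family with the modular sets $A_t$ of Theorem~\ref{thm:A_t}, their variants $A_t^k$ from the proof of Lemma~\ref{lemma:evenchar}, and with $\{0,1\},\{0,2\}$---in both orders---produces a large class of odd characters. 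The values this misses turn out to be, roughly, those $\lambda$ for which $\lambda-1$ has a small $3$-free part; to capture these one constructs a further family of even-modulus near-modular sets at modulus $28$ (and at $28\cdot 3^k$, obtained by tensoring with $(\{0,1\}\otimes\{0,2\})^k$). Since $\gcd(28,15)=1$, lifting the maximum of such a set sweeps it through every residue $\bmod 15$, giving independent control of $\lambda\bmod 3$ and $\lambda\bmod 5$; these are the sets reused to settle the exceptional families discussed after this lemma.

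With the gadgets available, the proof is a case analysis on $(\lambda\bmod 3,\lambda\bmod 5)$. Since $\lambda$ is odd, the hypothesis $\lambda\not\equiv 1\pmod{30}$ is exactly $\lambda\not\equiv 1\pmod 3$ or $\lambda\not\equiv 1\pmod 5$. If $\lambda\equiv 0$ or $2\pmod 3$, one takes $\{0,1\}$ resp.\ $\{0,2\}$ (suitably lifted) as outermost factor, which forces the required class $\bmod 3$, and then chooses an even-modulus gadget (of $L_0$- or modulus-$28$-type) tensored with an appropriate $A_t^k$ and lifted so that, by the identity above, the character lands exactly on $\lambda$; the factors of $3^s$ simultaneously fix $\lambda\bmod 5$. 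If $\lambda\equiv 1\pmod 3$ (so necessarily $\lambda\not\equiv 1\pmod 5$), one uses instead the combinations whose character is $\equiv 1\pmod 3$ and whose residue $\bmod 5$ ranges over the four values $\{0,2,3,4\}$; the only combination excluded from this picture is the one that is also $\equiv 1\pmod 5$, i.e.\ $\lambda\equiv 1\pmod{30}$, and the same divisibility constraints (coming from the forced powers of $3$ in $\lambda-1$) account for why only $\lambda\ge 61$ can be reached this way.

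The main obstacle is the construction and organization of the even-modulus near-modular sets: because none exist at the very smallest even moduli, the base gadgets must be found at moduli $10$ and $28$, and the delicate part is to check---chiefly via the arithmetic of $\lambda(X\otimes Y)=\lambda(X)+M_X\lambda(Y)$ together with the divisibility restrictions imposed by the factors of $3$---that tensoring and lifting these finitely many gadgets really covers \emph{every} odd $\lambda\ge 61$ with $\lambda\not\equiv 1\pmod{30}$, with the congruence class $\equiv 1\pmod{30}$ and the finitely many small values falling outside the scope of this argument and handled by the later sections.
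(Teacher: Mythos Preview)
Your proposal is not a proof but a strategy outline, and the strategy is both more elaborate than needed and not verified. The paper's argument for this lemma is entirely direct: it exhibits in the appendix $28$ explicit near-modular sets modulo $30$, each of size $2^3$, with maximum elements running over $\{46,\ldots,59\}\cup\{61,\ldots,74\}$; adding $30$ to the maximum preserves near-modularity, so every $t\ge 46$ with $t\not\equiv 0\pmod{15}$ occurs as a maximum, and Lemma~\ref{lemma:nearModtoMod} then yields character $\lambda=2t-29$, i.e.\ every odd $\lambda\ge 63$ with $\lambda\not\equiv 1\pmod{30}$. No tensoring, no case analysis---just a table and the lifting trick you already identified.

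By contrast, you propose to build the needed sets out of $L_0=\{0,1,7,8\}\bmod 10$ tensored with $\{0,1\}$, $\{0,2\}$, and the $A_t$, together with modulus-$28$ gadgets. Several ingredients are sound: the identity $\lambda(X\otimes Y)=\lambda(X)+M_X\lambda(Y)$ is correct, $L_0$ really is near-modular $\bmod\ 10$ (the paper itself uses it in the proof of Lemma~\ref{lemma:expfamily}), and lifting any nonzero element to become the maximum is legitimate. But the crucial step---the claimed case analysis on $(\lambda\bmod 3,\lambda\bmod 5)$ showing that your tensors and lifts hit \emph{every} odd $\lambda\ge 61$ with $\lambda\not\equiv 1\pmod{30}$---is asserted, not carried out. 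You even concede that the $L_0$-based constructions miss some residues and that modulus-$28$ sets are required to patch them; in the paper those sets belong to Lemma~\ref{lemma:mod28}, a separate statement, and are not needed here at all. So as a proof of this lemma your proposal has a genuine gap: the coverage claim is unproved, and the patch you invoke lies outside the scope of the lemma. If you want a structural substitute for the paper's table, you would need to actually enumerate which residues $\bmod\ 60$ (or $\bmod\ 2N$ for each modulus $N$ you use) are attained by each gadget-plus-lift and check that their union is $\{\text{odd }\lambda:\lambda\not\equiv 1\pmod{30}\}$; until that bookkeeping is done, the argument is incomplete.
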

\begin{proof}
In the appendix, we give near-modular sets $\bmod$ $30$ with maximum elements $46\le \ell\le 59$ and $61\le \ell\le 74$. Note that incrementing the maximum value of each of these sets by $30$, leaves unchanged the property that the set is near-modular $\bmod$ $30$. Therefore it follows that there is a near-modular set $\bmod$ $30$ with maximum element $t$ for $t\ge 45$ and $t\not\equiv 0 \bmod 15$.
Lemma \ref{lemma:nearModtoMod} then implies the existence of Stanley sequences with characters
\[\lambda=2t-30+1=2(t-15)+1\] and the result follows by ranging over all $t\ge 45$ and $t\not\equiv 0\bmod 15$.
\end{proof}
\begin{lemma}\label{lemma:expfamily}
Suppose that $\lambda\equiv 1\bmod 30$ and is not of the form $10\cdot 3^n+1$ or $20\cdot 3^n+1$ for $n\in\mathbb{N}$. Then there exists an independent Stanley sequence with character $\lambda$.
\end{lemma}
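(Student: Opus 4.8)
The plan is to invoke Lemma \ref{lemma:nearModtoMod}: it suffices to produce, for each admissible $\lambda$, a near-modular set of some modulus $N$ whose cardinality is a power of two and whose maximal element $\ell$ satisfies $2\ell+1-N=\lambda$. Write $\lambda=30j+1$. The hypothesis that $\lambda$ is not $10\cdot 3^n+1$ or $20\cdot 3^n+1$ says precisely that, writing $j=3^bc$ with $\gcd(c,3)=1$, we have $c\ge 4$: indeed $30j+1=10\cdot 3^n+1$ forces $j=3^{n-1}$, and $30j+1=20\cdot 3^n+1$ forces $j=2\cdot 3^{n-1}$, i.e.\ $c\in\{1,2\}$.

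The set I would use is $A_a^k\otimes L$, built from the following ingredients. Let $L=\{0,1,7,8\}$; a direct check shows $L$ is $3$-free $\bmod 10$ and mod-covers $\Z$, so it is near-modular $\bmod 10$ with $|L|=4$. Let $A_a^k$ be the near-modular set $\bmod\ 3^a$ with $\max(A_a^k)=k\cdot 3^{a-1}$ and $|A_a^k|=2^a$, available for every $k\ge 2$ with $k\not\equiv 0\bmod 3$ by the construction inside the proof of Lemma \ref{lemma:evenchar} (start from $A_a$ or from $2A_a$, then repeatedly push the maximal element up by $3^a$; each step preserves cardinality). Take $a=b+2$ and $k=5c-9$; then $k\ge 11\ge 2$ and $k\equiv 2c\not\equiv 0\bmod 3$, so $A_a^k$ exists. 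By Lemma \ref{lemma:tensor}, $A_a^k\otimes L=A_a^k+3^aL$ is near-modular $\bmod\ 10\cdot 3^a$ with maximal element $k\cdot 3^{a-1}+8\cdot 3^a=(5c+15)3^{a-1}$. Granting that $|A_a^k+3^aL|=4\cdot 2^a$ (a power of two), Lemma \ref{lemma:nearModtoMod} produces an independent Stanley sequence whose character is
\[
2(5c+15)3^{a-1}+1-10\cdot 3^a=10c\cdot 3^{a-1}+1=30\cdot 3^{b}c+1=\lambda .
\]
Letting $j=3^bc$ range over all integers with $c\ge 4$ then realizes every $\lambda\equiv 1\bmod 30$ required by the lemma.

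The one substantive step is the cardinality claim: that the four translates $A_a^k+3^a\ell$, $\ell\in\{0,1,7,8\}$, are pairwise disjoint, equivalently that no nonzero difference of two elements of $A_a^k$ equals $3^ad$ for $d\in\{1,6,7,8\}$. I would argue from the explicit shape of $A_a^k$: for $k\equiv 2\bmod 3$ it is $A_a$ with its maximal element $2\cdot 3^{a-1}$ deleted and $k\cdot 3^{a-1}$ adjoined; for $k\equiv 1\bmod 3$ it is $2A_a$ with its maximal element $4\cdot 3^{a-1}$ deleted and $k\cdot 3^{a-1}$ adjoined (and $A_a^4=2A_a$). Differences not involving the adjoined element lie in $(-3^a,3^a)$ (resp.\ are even and lie in $(-2\cdot 3^a,2\cdot 3^a)$), hence are never $3^ad$ with $d\ge 1$; and a difference $k\cdot 3^{a-1}-x$ (with $x$ in $A_a$, resp.\ in $2A_a$) equal to $3^ad$ forces $x=(k-3d)3^{a-1}$, whereupon the range constraint $0\le x<3^a$ (resp.\ $0\le x<2\cdot 3^a$ together with $x$ even) combined with $k\bmod 3$ pins $k-3d$ to the unique value making $x$ the deleted maximal element — a contradiction. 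This short two-residue-class case check (plus the parity remark when $2A_a$ is involved) is where I expect the bulk of the work to lie. The excluded families $10\cdot 3^n+1$ and $20\cdot 3^n+1$, namely the cases $c\in\{1,2\}$ where $k=5c-9<2$ and $A_a^k$ is no longer defined, are then handled separately using near-modular sets $\bmod 28$.
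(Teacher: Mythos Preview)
Your argument is correct and reaches the same conclusion as the paper, but via a more unified construction. The paper builds four separate families $C_n,D_n,E_n,F_n$ by tensoring mild modifications of $R_n$ (the set $A_t$ of Theorem~\ref{thm:A_t}) with either $\{0,7,9,16\}$ or $\{0,1,7,8\}$, and then adds multiples of the modulus to the maximal element; each family hits one residue class of $t$ in $\lambda=10t\cdot 3^n+1$. You instead use a single family $A_a^k\otimes\{0,1,7,8\}$ and let $k$ absorb all the variation. The trade-off is that in the paper's setup the four translates are disjoint essentially by inspection (the base sets have small diameter relative to the spacing $3^{n+1}\cdot(\ell_2-\ell_1)$, or are contained in a coset of $8\Z$), so the power-of-two cardinality comes for free; in your setup $A_a^k$ can be much wider than $3^a$, so you must argue disjointness via the short case analysis you outline. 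That analysis is sound: for $k\equiv 2\bmod 3$ the range forces $k-3d=2$, giving the deleted element; for $k\equiv 1\bmod 3$ the range forces $k-3d\in\{1,4\}$, and $k-3d=1$ yields the odd number $3^{a-1}\notin 2A_a$ while $k-3d=4$ is again the deleted element. One small point worth noting (present in the paper's statement too): the case $\lambda=1$ technically satisfies the hypotheses of the lemma but is not covered by either construction; this is harmless since the lemma is only invoked for $\lambda>73$.
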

\begin{proof}
We first construct families $C_n,D_n,E_n,F_n$ of near-modular sets. Let $R_n$ for $n\ge 0$ denote any family of modular sets such that $R_n$ is modular mod $3^{n+1}$ with maximum element $2\cdot 3^n$. The existence of $R_n$ is guaranteed by Theorem 16. Then
\begin{itemize}
\item Let $C_n=R_n\otimes \{0, 7, 9, 16\}$. The maximum element of $C_n$ is $2\cdot 3^n+16\cdot 3^{n+1}=50\cdot 3^n$, and $C_n$ is a near-modular set mod $10\cdot 3^{n+1}$.
\item Let $R_n'$ denote the set obtained by doubling every element in $R_n$ then increasing the maximum element by $3^{n+1}$. Let $D_n=R'_n\otimes \{0, 7, 9, 16\}$. The maximum element of $D_n$ is $7\cdot 3^n+16\cdot 3^{n+1}=55\cdot 3^n$, and $D_n$ is a near-modular set mod $10\cdot 3^{n+1}$.
\item Let $R_n''$ denote the set obtained by increasing the maximum element in $R_n$ by $3^{n+2}$. Let $E_n=R''_n\otimes \{0, 1,7,8\}$. The maximum element of $E_n$ is $3^{n+2}+2\cdot 3^n+8\cdot 3^{n+1}=35\cdot 3^n$, and $E_n$ is a near-modular set mod $10\cdot 3^{n+1}$.
\item Let $R_n'''$ denote the set obtained by multiplying every element in $R_n$ by $8$. Let $F_n=R_n''''\otimes \{0,1,7,8\}$. The maximum element of $F_n$ is $16\cdot 3^n+8\cdot 3^{n+1}=40\cdot 3^n$, and $F_n$ is a near-modular set mod $10\cdot 3^{n+1}$.
\end{itemize}
Now we consider which characters are obtained by $C_n$, possibly adding multiples of $10\cdot 3^{n+1}$ to the largest element. For any nonnegative integer $k$, we obtain the character
\[2(50\cdot 3^n+10k\cdot 3^{n+1})+1-10\cdot 3^{n+1}=(70+60k)3^n+1.\]
Analogously, the possible characters obtained by $D_n, E_n, F_n$, respectively, are:
\[2(55\cdot 3^n+10k\cdot 3^{n+1})+1-10\cdot 3^{n+1}=(80+60k)3^n+1,\]
\[2(35\cdot 3^n+10k\cdot 3^{n+1})+1-10\cdot 3^{n+1}=(40+60k)3^n+1,\]
and
\[2(40\cdot 3^n+10k\cdot 3^{n+1})+1-10\cdot 3^{n+1}=(50+60k)3^n+1.\]
These four sets together yield all $\lambda$ of the form $10t\cdot 3^n+1$ where $3\nmid t$ and $t\notin\{1, 2\}$. Letting $n$ range over the positive integers gives the desired result. 
\end{proof}
\begin{lemma}\label{lemma:mod28}
For all $\lambda\ge 87$, $\lambda\equiv 1\bmod2$, and $\lambda\not\equiv 1\bmod 14$, there exists an independent Stanley sequence with character $\lambda$.
\end{lemma}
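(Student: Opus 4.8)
The plan is to follow the template of Lemma~\ref{lemma:mod30}, with the modulus $30$ replaced by $28$. The point is that Lemma~\ref{lemma:nearModtoMod}, applied to a near-modular set $L\bmod 28$ whose cardinality is a power of two and whose maximum element is $t$, produces an \emph{independent} Stanley sequence of character
\[
\lambda = 2t + 1 - 28 = 2(t-14)+1 .
\]
So it suffices to exhibit, for every $t\ge 57$ with $7\nmid t$, a near-modular set $\bmod\ 28$ of power-of-two size with maximum element $t$: as $t$ ranges over these values, $2(t-14)+1$ ranges over precisely the odd integers $\lambda\ge 87$ with $\lambda\not\equiv 1\pmod{14}$, since $\lambda\ge 87\iff t\ge 57$ and $2(t-14)+1\equiv 1\pmod{14}\iff t\equiv 0\pmod 7$.

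To produce these sets for all $t\ge 57$ with $7\nmid t$, I would give explicitly in the appendix one near-modular set $\bmod\ 28$ of power-of-two size with maximum element $\ell$ for each $\ell$ in the range $57\le\ell\le 83$ with $7\nmid\ell$ --- that is, $24$ sets, whose maxima realize every residue modulo $28$ that is not divisible by $7$. Adding $28$ to the maximum element of a near-modular set $\bmod\ 28$ changes none of the residues modulo $28$ occurring in it, and hence preserves both near-modularity $\bmod\ 28$ and the cardinality; iterating, each of the $24$ base sets yields near-modular sets with maximum element $\ell,\ \ell+28,\ \ell+56,\dots$, and together these cover all $t\ge 57$ with $7\nmid t$, as required. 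The deduction of the lemma is then immediate from the displayed formula.

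The real content, and the main obstacle, is the finite verification underlying the appendix: for each of the $24$ prescribed maximum values one must actually exhibit a subset of $\{0,1,\dots\}$ that contains $0$, has that maximum, has cardinality a power of two, is $3$-free modulo $28$, and mod-covers every residue modulo $28$. The natural size to aim for is $8$ (matching the rough heuristic $28^{\log_3 2}\approx 8.2$), and a set of size $8$ supplies only $\binom{8}{2}+8=36$ mod-covering pairs against the $28$ residues that must be hit, so the search is tight; for the occasional maximum value where size $8$ turns out to be infeasible one can fall back to size $16$, which is still a power of two, so independence is unaffected. I would carry out this search by computer, exactly as in the $\bmod\ 30$ case of Lemma~\ref{lemma:mod30}, and record the resulting sets in the appendix; once they are in place, the argument above goes through without further effort.
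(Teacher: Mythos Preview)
Your proposal is correct and follows essentially the same approach as the paper: exhibit finitely many near-modular sets $\bmod\ 28$ of size $8$ in the appendix, increment their maxima by multiples of $28$, and apply Lemma~\ref{lemma:nearModtoMod}. The only difference is cosmetic --- the paper lists $26$ base sets (excluding $\ell\equiv 0\bmod 14$) rather than your $24$ (excluding $7\mid\ell$), but your sharper observation that $\lambda\equiv 1\pmod{14}\iff 7\mid t$ shows the two extra sets are not actually needed for the lemma as stated.
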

\begin{proof}
In the appendix we give a near-modular sets $\bmod$ $28$ with maximum elements $57\le \ell\le 69$ and $71\le \ell\le 83$. If one increments the maximum value of any of these sets by $28$, the property that the set is near-modular $\bmod$ $28$ remains unchanged. It follows that there is a near-modular set $\bmod$ $28$ with maximal element $t$ for all $t\ge 57$ and $t\not\equiv 0\bmod 14$. Therefore, Lemma \ref{lemma:nearModtoMod} implies the existence of Stanley sequences with characters
\[\lambda=2t-28+1=2(t-14)+1\] and the result follows by ranging over all $t\ge 57$ and $t\not\equiv 0\bmod 14$.
\end{proof}
Now we proceed to prove the main result of the paper.
\begin{thm}\label{thm:main2}
Every odd positive integer $\lambda\notin \{1, 3, 5, 9, 11, 15\}$ is the character of some independent Stanley sequence. 
\end{thm}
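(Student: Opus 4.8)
The plan is to assemble Theorem~\ref{thm:main2} by combining the four lemmas of this section, each of which handles odd characters lying in a different congruence or growth class, and then verifying that the finitely many small odd values not covered by any lemma can be checked directly. First I would observe that Lemma~\ref{lemma:mod30} handles every odd $\lambda \ge 61$ with $\lambda \not\equiv 1 \pmod{30}$, and Lemma~\ref{lemma:mod28} handles every odd $\lambda \ge 87$ with $\lambda \not\equiv 1 \pmod{14}$. Taken together, these two lemmas dispose of every sufficiently large odd $\lambda$ unless $\lambda \equiv 1 \pmod{30}$ and $\lambda \equiv 1 \pmod{14}$ simultaneously, which by the Chinese Remainder Theorem (since $\gcd(14,30)=2$ and both congruences force $\lambda$ odd) means $\lambda \equiv 1 \pmod{\operatorname{lcm}(14,30)} = \lambda \equiv 1 \pmod{210}$. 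Wait — more carefully, the residues are $1 \bmod 30$ and $1 \bmod 14$, and a common solution exists and is unique mod $210$, namely $\lambda \equiv 1 \pmod{210}$. In particular $\lambda \equiv 1 \pmod{30}$, so Lemma~\ref{lemma:expfamily} applies and produces such a $\lambda$ as a character \emph{unless} $\lambda$ is of the exceptional form $10\cdot 3^n + 1$ or $20\cdot 3^n + 1$.

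Next I would dispatch these two exponential families. Note $10\cdot 3^n + 1 \equiv 1 \pmod{30}$ for $n \ge 1$, but modulo $14$ we have $10 \cdot 3^n + 1 \bmod 14$ cycling through the values of $3^n$; one checks that $10\cdot 3^n + 1 \not\equiv 1 \pmod{14}$ for all $n \ge 1$ (equivalently $10 \cdot 3^n \not\equiv 0 \pmod{14}$, which holds since $7 \nmid 10\cdot 3^n$). Hence for $n$ large enough that $10\cdot 3^n + 1 \ge 87$, Lemma~\ref{lemma:mod28} already covers $\lambda = 10\cdot 3^n+1$; the same argument with $20$ in place of $10$ covers $20\cdot 3^n+1$. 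This leaves only the small members of these families, together with all odd $\lambda < 87$ (or $<61$, wherever the large-$\lambda$ lemmas first fail to apply), as a finite list to be handled by hand.

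For the finite remaining cases I would appeal to Rolnick's table \cite{R17}, which exhibits explicit independent Stanley sequences realizing every character up to $75$ except those in $\{1,3,5,9,11,15\}$, and supplement it with the constructions already built in this paper: the near-modular sets $\bmod 30$ with maximum elements $46 \le \ell \le 59$ and $61 \le \ell \le 74$ (appendix, invoked in Lemma~\ref{lemma:mod30}) give characters $2(\ell-15)+1$ covering a dense range of odd values below $87$, and the near-modular sets $\bmod 28$ with $57 \le \ell \le 83$ (appendix, invoked in Lemma~\ref{lemma:mod28}) give further odd characters $2(\ell - 14)+1$ in that range. Cross-referencing these lists against the target set $\{\,\lambda \text{ odd} : 17 \le \lambda < 87,\ \lambda \ne 1,3,5,9,11,15\,\} \cup \{\,10\cdot 3^n+1,\ 20\cdot 3^n+1 : n \text{ small}\,\}$, I would verify that every required value appears, and conclude.

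The main obstacle I anticipate is precisely the bookkeeping at the boundary: one must be careful that the congruence-class sieve (Lemmas~\ref{lemma:mod30}, \ref{lemma:mod28}, \ref{lemma:expfamily}) genuinely covers \emph{every} large odd $\lambda$ with no residue class slipping through — in particular confirming that the only odd residues escaping both the $\bmod 30$ and $\bmod 14$ lemmas are those $\equiv 1 \pmod{210}$, that all of these lie in the $\lambda \equiv 1 \pmod{30}$ class handled by Lemma~\ref{lemma:expfamily}, and that the two exceptional exponential families of Lemma~\ref{lemma:expfamily} are themselves eventually absorbed by Lemma~\ref{lemma:mod28}. After that the residual finite check against the tabulated and appendix constructions is routine but must be carried out explicitly to be rigorous.
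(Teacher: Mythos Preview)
Your proposal is correct and uses essentially the same ingredients and logic as the paper's proof. The paper's version is a bit more streamlined: it skips the CRT detour and simply chains Lemma~\ref{lemma:mod30}, then Lemma~\ref{lemma:expfamily}, then Lemma~\ref{lemma:mod28}, observing that any unattained odd $\lambda>73$ must be of the form $10\cdot 3^n+1$ or $20\cdot 3^n+1$ and hence $\lambda\ge 91\ge 87$, so Rolnick's table through $\lambda\le 73$ already disposes of the entire finite residue without any further bookkeeping.
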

\begin{proof}
Rolnick proved the existence of such $\lambda$ for $\lambda\le 73$ in \cite{R17}. Now suppose that some odd $\lambda>73$ is not attainable. Then according to Lemmas \ref{lemma:mod30} and \ref{lemma:expfamily}, $\lambda$ is of the form $10\cdot 3^n+1$ or $20\cdot 3^n+1$, and in particular $\lambda\ge 91$. But according to the Lemma \ref{lemma:mod28}, this implies $7|\lambda-1$, which is impossible. Hence the theorem follows.
\end{proof}
\section{Conclusions}\label{sect:conclusion}
This paper has proven Rolnick's character conjecture (Conjecture \ref{conj:allchar}) by showing that each nonnegative integer $\lambda\not\in\{1,3,5,9,11,15\}$ occurs as the character of some independent Stanley sequence. Further investigation is required to determine whether the theory of near-modular sets can be applied to other conjectures of Rolnick \cite{R17}. Although this paper contributes to the understanding of Stanley sequences with Type I growth, proving that any Stanley sequence follows Type II growth would be a significant contribution to the theory.
\section{Acknowledgements}\label{sect:acknowledgements}
The authors would like to thank Joe Gallian and David Rolnick for carefully reading through the manuscript and providing helpful suggestions. The last two authors were supported by the NSF/DMS--1650947 and conducted this research at the Duluth REU. 
\bibliography{stanley}
\bibliographystyle{plain}
\newpage
\section*{Appendix}
\label{sect:appendix}
The desired modular sets$\mod 28$ and$\mod 30$, are given in the left and right tables below respectively.

\begin{table}[!htb]
    \caption{Necessary Near-Modular Sets Modulo $28$ and $30$}
    \begin{minipage}{.5\linewidth}
      \centering
\begin{tabular}[t]{|c||c| } 
\hline
Max Element & Corresponding Set
\\\hline 57&0,5,11,13,16,18,24,57
\\\hline58&0,8,9,12,27,31,39,58
\\\hline59&0,1,9,10,13,32,40,59
\\\hline60&0,9,12,29,31,38,41,60
\\\hline61&011,13,18,24,29,44,61
\\\hline62&0,11,13,23,24,36,47,62
\\\hline63&0,5,13,17,18,30,50,63
\\\hline64&0,13,17,23,30,40,53,64
\\\hline65&0,3,27,36,39,40,58,65
\\\hline66&0,1,9,12,13,32,59,66
\\\hline67&0,8,13,23,47,52,62,67
\\\hline68&0,3,27,30,36,39,65,68
\\\hline69&0,1,3,9,12,32,66,69
\\\hline71&0,5,9,17,20,22,32,71
\\\hline72&0,11,13,18,24,29,33,72
\\\hline73&0,15,23,27,32,38,40,73
\\\hline74&0,5,11,16,24,29,41,74
\\\hline75&0,11,23,24,34,36,41,75
\\\hline76&0,5,11,16,26,31,43,76
\\\hline77&0,11,15,23,26,34,38,77
\\\hline78&0,4,5,9,15,17,48,78
\\\hline79&0,1,3,4,19,22,48,79
\\\hline80&0,5,13,16,18,39,57,80
\\\hline81&0,13,17,23,36,40,58,81
\\\hline82&0,5,11,15,16,20,59,82
\\\hline83&0,15,17,23,38,40,60,83
\\\hline
\end{tabular}
    \end{minipage}%
    \begin{minipage}{.5\linewidth}
      \centering
\begin{tabular}[t]{|c||c| } 
\hline
Max Element & Corresponding Set
\\\hline46&0,7,9,10,17,19,26,46
\\\hline47&0,7,9,16,20,26,29,47
\\\hline48&0,10,13,21,27,31,34,48
\\\hline49&0,7,9,16,17,26,40,49
\\\hline50&0,1,3,4,14,23,41,50
\\\hline51&0,1,3,4,10,13,44,51
\\\hline52&0,4,21,25,27,31,48,52
\\\hline53&0,5,21,26,27,32,48,53
\\\hline54&0,1,3,21,22,28,49,54
\\\hline55&0,1,3,4,21,24,52,55
\\\hline56&0,2,3,5,21,24,53,56
\\\hline57&0,7,9,20,29,36,56,57
\\\hline58&0,1,10,11,17,18,57,58
\\\hline59&0,7,9,16,36,40,57,59
\\\hline61&0,3,7,10,21,24,28,61
\\\hline62&0,9,13,20,22,23,29,62
\\\hline63&0,19,22,23,29,32,40,63
\\\hline64&0,3,14,20,21,23,41,64
\\\hline65&0,3,17,21,24,38,44,65
\\\hline66&0,7,9,10,17,19,46,66
\\\hline67&0,3,19,21,24,40,46,67
\\\hline68&0,1,7,10,11,18,47,68
\\\hline69&0,3,19,22,23,32,50,69
\\\hline70&0,7,8,11,17,18,51,70
\\\hline71&0,3,20,21,24,34,53,71
\\\hline72&0,3,10,19,22,39,53,72
\\\hline73&0,3,11,14,21,40,54,73
\\\hline74&0,3,10,13,21,41,54,74
\\\hline
\end{tabular}
    \end{minipage} 
\end{table}

\end{document}